\newtheorem{theorem}{Theorem}[section]
\newtheorem{lemma}[theorem]{Lemma}
\newtheorem{proposition}[theorem]{Proposition}
\newtheorem{corollary}[theorem]{Corollary}
\newenvironment{proof}[1][Proof]{\begin{trivlist}
		\item[\hskip \labelsep {\bfseries #1}]}{\end{trivlist}}
\newenvironment{definition}[1][Definition]{\begin{trivlist}
		\item[\hskip \labelsep {\bfseries #1}]}{\end{trivlist}}
\newenvironment{remark}[1][Remark]{\begin{trivlist}
		\item[\hskip \labelsep {\bfseries #1}]}{\end{trivlist}}
\newcommand{\qed}{\nobreak \ifvmode \relax \else
	\ifdim\lastskip<1.5em \hskip-\lastskip
	\hskip1.5em plus0em minus0.5em \fi \nobreak
	\vrule height0.75em width0.5em depth0.25em\fi}
\title{Determinantally equivalent nonzero functions}
\author{Harry Sapranidis Mantelos}
\date{\today}
\newcommand{\vertiii}[1]{{\left\vert\kern-0.25ex\left\vert\kern-0.25ex\left\vert #1 
		\right\vert\kern-0.25ex\right\vert\kern-0.25ex\right\vert}}
\begin{document}
	
	\maketitle
	\thispagestyle{empty}

	\begin{abstract}
		We study the problem raised in [Marco Stevens, Equivalent symmetric kernels of determinantal point processes, RMTA, 10(03):2150027, 2021] concerning the extension of its main result to the more general (potentially non-symmetric) setting. We construct a counterexample disproving the conjecture proposed in the paper, and subsequently solve it under some additional minor assumptions that preclude such counterexamples.
		
		The problem is plainly stated as follows: Let $\Lambda$ be a set and $\mathbb{F}$ a field, and suppose that $K,Q:\Lambda^2\to\mathbb{F}$ are two functions such that for any $n\in\mathbb{N}$ and $x_1,x_2,\ldots,x_n\in\Lambda$, the determinants of matrices $(K(x_i,x_j))_{1\leq i,j\leq n}$ and $(Q(x_i,x_j))_{1\leq i,j\leq n}$ agree. What are all the possible transformations that transform $Q$ into $K$? In [Marco Stevens, Equivalent symmetric kernels of determinantal point processes, RMTA, 10(03):2150027, 2021] the following two were conjectured: $(Tf)(x,y)=f(y,x)$; and $(Tf)(x,y)=g(x)g(y)^{-1}f(x,y)$ for some nowhere-zero function $g$. In the same paper, this conjectured classification is verified in the case of symmetric functions $K$ and $Q$. By extending the graph-theoretic techniques of the paper, we show that under some surprisingly simple and natural conditions the conjecture remains valid even with the symmetry constraints relaxed.
		
		By taking $\Lambda$ finite, the above problem, furthermore, reduces to that between two square matrices investigated in [Raphael Loewy,  Principal minors and diagonal similarity of matrices, Linear Algebra and its Applications 78 (1986), 23--64]. Hence, our paper presents a simple non-linear-algebraic proof that uses only some elementary combinatorics and  three simple algebraic identities involving $3$-cycles and $4$-cycles.
	\end{abstract}

	\section{Introduction and main result}
	In \cite{loewy1986principal}, Loewy investigates the relation between diagonal similarity and the concept of equal corresponding principle minors of two matrices. Diagonal similarity of matrices has long been studied in linear algebra; c.f., \cite{bassett1968qualitative, engel1973cyclic, fiedler1969cyclic, engel1980matrices}; as well as its connections with graph theory; cf., \cite{engel1982algorithms, saunders1978flows}. The allusion that it may have a connection with the concept of equal corresponding principle minors stems from the observation that if two $n\times n$ matrices $A$ and $B$ are diagonally similar (up to a transposition), i.e., $B=D^{-1}AD$ or $B=D^{-1}A^TD$ for some non-singular diagonal matrix $D$, then all their corresponding principle minors agree. It is then the converse statement and the extent of its validity that would naturally trigger one's curiosity. It was discovered by Loewy in the aforementioned paper that  the following two conditions (stated here informally) are enough for this converse to hold:
	\begin{itemize}
		\item $A$ is irreducible;
		\item some particular sub-matrices of $A$ have rank at least $2$.
	\end{itemize}
	Although purely linear-algebraic, the same exact problem can be found expressed, albeit in random matrix theory jargon, in probabilistic literature as well; and more specifically, in the theory of discrete determinantal point process (DPP). Said stochastic processes, in layman's terms, serve as models of random sets of finitely-many points. Their key feature is their ability to effectively model repulsion/diversity of points. This, and many other desirable features, has resulted in their ever-increasing popularity in the machine-learning community; cf. \cite{kulesza2012determinantal, tremblay2019determinantal, celis2018fair}. Their connection with Loewy's linear-algebraic problem stems from the fact that a DPP which models the configuration of, say $n$, random points which we label for simplicity $\{1,\ldots,n\}$, has a probability distribution that is completely characterized by a (deterministic) $n\times n$ matrix, say $K=(K_{ij})_{i,j=1}^n$, called the \textit{kernel} of the DPP, whereby the probability of a particular arrangement of points is given by the corresponding principle minor of the kernel; for example, the probability of observing the points $2$ and $3$ together is given by the principle minor $\begin{vmatrix} K_{22} & K_{23} \\ K_{32} & K_{33} \end{vmatrix}$. As such, it should be clear to the reader that the kernel of a (discrete) DPP is not unique. In particular, any $n\times n$ matrix $Q$ with equal corresponding principle minors (to those of $K$) is also a kernel of the DPP, and is commonly referred to in the literature as an \textit{equivalent kernel}. Thus, the formulation of Loewy's problem in this DPP language is as follows: given equivalent kernels $K$ and $Q$, to what extent is it true that they are diagonally similar? 
	
	Although (discrete) DPPs with non-symmetric kernels have been studied, e.g., \cite{gartrell2019learning, arnaud2024determinantal, borodin2010adding}; they have not received nearly the amount of attention their symmetric counterparts have, e.g., \cite{determinantal_thinning_of_pps_with_network, pmlr-v89-sadeghi19a, bardenet2015inference}. That being the case, it was of primary interest to answer the above question restricting to the case of symmetric (or Hermitian) equivalent kernels that is symmetric (or Hermitian) matrices with equal corresponding principle minors. It was discovered in \cite{kulesza2012learning} that in the (real or complex) symmetric setting, equivalent kernels $K$ and $Q$ will be diagonally similar with no additional conditions on the kernels/matrices. In \cite{launay2021determinantal}, the investigation was taken a bit further by restricting to the (more general) case of (complex) Hermitian matrices. The respective result follows from a specialization of \cite{loewy1986principal}.
	
	After this brief review of the origins and history of the original linear-algebraic problem, we now describe the functional extension we are concerned with. Let $K:\Lambda^2\to\mathbb{F}$  and $Q:\Lambda^2\to\mathbb{F}$ be two functions, where $\Lambda$ is some abstract set (of arbitrary cardinality) and $\mathbb{F}$ is an arbitrary field. Suppose that $K$ and $Q$ are \textit{determinantally equivalent} in the sense that
	\begin{equation}
		\label{equal_determinants_relation}
		\det(Q(x_i,x_j))_{i,j=1}^n=\det(K(x_i,x_j))_{i,j=1}^n\qquad \forall x_1,\ldots,x_n\in\Lambda\enspace\forall n\in\mathbb{N}.
	\end{equation}
	What are, then, all the possible transformations that transform $Q$ into $K$? In the paper that first formulated this problem, \cite{equiv_symm_kernels_for_dpps}, the following transformations were conjectured in Conjecture 1.4 therein:
	\begin{itemize}
		\item \textit{conjugation transformations}. There exists a nowhere-zero function $g:\Lambda\to\mathbb{F}$ such that for every $x,y\in\Lambda$,
		\begin{equation}
			\label{conjugation_transf}
			Q(x,y)=g(x)g(y)^{-1} K(x,y).
		\end{equation}
	We refer to $g$ as the \textit{conjugation function} of the transformation.
		\item \textit{transposition transformations}. $Q(x,y)=K(y,x)$ for every $x,y\in\Lambda$.
	\end{itemize}
	\begin{remark}
		Observe how these functional transformations are analogous to the concept of diagonal similarity between two matrices (up to a transposition). In fact, they are precisely just that when one works with $\Lambda$ finite.
	\end{remark}
	It was then proved in the same paper, via elementary techniques involving cycles, that restricting to symmetric functions $K$ and $Q$, the conjecture holds. It is also worth pointing out that this exact result, in the case when the underlying `set $\Lambda$' is finite, is precisely the relevant result from \cite{kulesza2012learning} involving symmetric matrices which we had discussed earlier. Both of the aforementioned results use very similar proof techniques. The problem of relaxing these symmetry constraints and solving this conjecture in that general setting was subsequently left open, and is what this paper is dedicated to. Specifically, we obtain the following result.
	\begin{theorem}
		\label{my_main_thm}
		Let $\Lambda$ be a set, $\mathbb{F}$ a field, and let $K,Q:\Lambda^2\to\mathbb{F}$ be two (not necessarily symmetric) nowhere-zero functions, except possibly on the set $\{(x,x):x\in\Lambda\}$. Suppose further that for every pairwise distinct $x,y,z,w\in\Lambda$,
		\begin{equation}
			\label{principle_minor_2_by_2_main_eqn}
			\begin{vmatrix}
				Q(x,y) & Q(x,w) \\
				Q(z,y) & Q(z,w)
			\end{vmatrix}\neq 0.
		\end{equation}
		If $K$ and $Q$ are determinantally equivalent, then $Q$ can be transformed into $K$ through only conjugation and transposition transformations. 
	\end{theorem}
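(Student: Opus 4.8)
The plan is to analyse the ratio function $r(x,y):=Q(x,y)/K(x,y)$ (for $x\neq y$) and to show that it is either a multiplicative coboundary $g(x)g(y)^{-1}$ --- which makes $Q$ a conjugation of $K$ --- or such a coboundary times $K(y,x)/K(x,y)$ --- which makes $Q$ a transposition composed with a conjugation. First I would dispose of small $n$: the case $n=1$ gives $Q(x,x)=K(x,x)$; then $n=2$ gives $Q(x,y)Q(y,x)=K(x,y)K(y,x)$, i.e.\ $r(y,x)=r(x,y)^{-1}$; and $n=3$, after cancelling the contributions already controlled, yields for distinct $x,y,z$ the identity $(t-1)(t-\mu)=0$, where $t:=r(x,y)r(y,z)r(z,x)$ and $\mu:=\mu(x,y,z):=\frac{K(x,z)K(z,y)K(y,x)}{K(x,y)K(y,z)K(z,x)}$. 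This is the \emph{triangle dichotomy}: $t(x,y,z)\in\{1,\mu(x,y,z)\}$ for every distinct triple. One notes that $\mu$ is, up to inversion, the multiplicative coboundary of $e(x,y):=K(x,y)/K(y,x)$, hence a multiplicative $2$-cocycle; and if $\mu\equiv1$ then already $t\equiv1$ and the two conjectured transformations coincide. (The cases $|\Lambda|\le 3$, where (\ref{principle_minor_2_by_2_main_eqn}) is vacuous, follow directly from the dichotomy together with the coboundary argument below, so I assume $|\Lambda|\ge 4$.)

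Next I would reorganise the determinant expansion: grouping the permutations in $\det\big(M(x_i,x_j)\big)_{i,j=1}^{n}$ by their cycle supports and inducting on $n$ shows that determinantal equivalence is \emph{equivalent} to the identities $\Sigma_B(Q)=\Sigma_B(K)$ over all finite $B\subseteq\Lambda$, where $\Sigma_B(M)$ is the sum of the cyclic products over all cyclic orderings of $B$. The case $|B|=4$ is the engine: using the relations already in hand it reduces to the statement that the sum of the six $4$-cycle products is the same for $Q$ and for $K$. Writing each $4$-cycle product of $Q$ as $\beta_j$ times the corresponding one of $K$ --- with $\beta_j$ the $4$-cycle product of $r$ --- and recording that on a $4$-subset the four triangle values obey one cocycle relation (R), and the four $\mu$'s the analogous relation (R$'$), one is left with a single polynomial constraint tying the $\beta_j$ to the $4$-cycle products $F_j$ of $K$.

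The heart of the proof is a \emph{local consistency lemma}: on every $4$-subset, all triangles with $\mu\neq1$ carry the same flag $c_T\in\{0,1\}$, where $t_T=\mu_T^{c_T}$. I would prove it by contradiction: assuming a mixed configuration, (R) and (R$'$) force it into one of exactly two normal forms, in each of which some pair among the four triangle $\mu$'s becomes linked (either equal, or with product $1$); substituting this into the $4$-cycle constraint makes it collapse to an equation of the shape $F_j=\mu_\bullet\,F_{j'}$ between $4$-cycle products of $K$, and this rearranges precisely to the vanishing of a $2\times 2$ minor of the type in (\ref{principle_minor_2_by_2_main_eqn}) on four distinct points --- a contradiction. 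I expect this to be the main obstacle: it requires careful tracking of cyclic orientations so that the collapsed identity is recognisably such a minor, and a verification that the two normal forms are exhaustive.

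Finally I would globalise and conclude. Since on each $4$-set either every $\mu\neq1$ triangle has flag $1$ or every one has flag $0$, and since $\mu$ being a $2$-cocycle forces the fourth triangle of a $4$-set to have $\mu\neq1$ whenever the other three have $\mu=1$, a chaining argument through overlapping triangles --- rerouting around the degenerate ``$\mu=1$-heavy'' sub-cases through the extra $\mu\neq1$ triangles those degeneracies produce --- upgrades local consistency to: globally $t\equiv1$ or globally $t\equiv\mu$. If $t\equiv1$, fix $x_0$ and put $g(x):=r(x,x_0)$, $g(x_0):=1$; the vanishing triangle products give $r(x,y)=g(x)g(y)^{-1}$, so $Q(x,y)=g(x)g(y)^{-1}K(x,y)$ for $x\neq y$, and trivially for $x=y$ since conjugation fixes the diagonal and $Q(x,x)=K(x,x)$ --- a conjugation. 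If $t\equiv\mu$, apply the previous case to $Q^{\mathsf T}(x,y):=Q(y,x)$, which still satisfies (\ref{equal_determinants_relation}) (determinants are transpose-invariant) and the stated hypotheses, and whose triangle function equals $t^{-1}\mu\equiv1$; then $Q^{\mathsf T}$ is a conjugation of $K$, so $Q$ is a conjugation composed with a transposition.
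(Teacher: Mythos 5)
Your outline reproduces the paper's architecture in different clothing: your triangle dichotomy $t\in\{1,\mu\}$ is exactly the paper's Case 1/Case 2 alternative (obtained there from the same quadratic-roots trick applied to the $n=3$ and $n=2$ identities), your reduction of the $n=4$ identity to ``the sum of the six $4$-cycle products agrees'' is the paper's Lemma~\ref{lemma_two_diff_sums_equal}, your local consistency lemma on $4$-subsets is Proposition~\ref{prop_step_2_lambda_4_elements}, and the endgame (cocycle $\Rightarrow$ conjugation, transposition handled by passing to $Q^{\mathsf T}$) is the same. Where you genuinely diverge is in how the mixed $2$--$2$ configuration on a $4$-set is killed: the paper manufactures an additional \emph{product} identity for a pair of $4$-cycles (Lemmas~\ref{graph_theory_lemma_3_from_notes} and~\ref{lemma_two_diff_prods_equal}), applies the quadratic trick a second time (Corollary~\ref{cor_final}, including a delicate argument that two index choices coincide), and only then reaches the forbidden minor; you instead normalize the mixed configuration by the two cocycle relations among the four $t$'s and the four $\mu$'s and collapse the remaining two-$4$-cycle sum identity directly. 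That leaner route does work: in the ``product $1$'' normal form one finds one $4$-cycle with $r=1$ (so its terms cancel), $K'[q^{[j]}]=\mu_3\mu_4 K[q^{[j]}]$-type identities for the other two, and the surviving sum identity factors as $(1-\mu_3)(1-\mu_4)\bigl(K[q^{[j_1]}]-\mu_4^{-1}K[q^{[j_2]}]\bigr)=0$, whose last factor is precisely a vanishing $2\times2$ minor of the type excluded by (\ref{principle_minor_2_by_2_main_eqn}); the ``equal $\mu$'s'' normal form behaves identically, and the $3$--$1$ splits are already killed by the two cocycle relations alone. So the step you flag as the main obstacle is sound, and it lets you bypass the paper's Lemmas~\ref{graph_theory_lemma_3_from_notes}, \ref{lemma_two_diff_prods_equal} and Corollary~\ref{cor_final} entirely.

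Two points in your write-up are genuinely incomplete. First, the decisive collapse is only promised, not performed; as noted above it does go through, but until the orientation bookkeeping is written out your ``local consistency lemma'' is an assertion. Second, and more seriously, your globalisation step misstates the fact it relies on: the $2$-cocycle relation for $\mu$ says that if \emph{three} triangles of a $4$-set have $\mu=1$ then so does the \emph{fourth} (equivalently, no $4$-set contains exactly one $\mu\neq1$ triangle) --- not that the fourth must have $\mu\neq1$. The correct statement is what you actually need for the rerouting, and the rerouting itself requires a real argument: if a flag-$1$ triangle and a flag-$0$ triangle share at most one vertex, the naive chain through intermediate triangles can be blocked when every intermediate is degenerate ($\mu=1$), and one must use the cocycle relation on well-chosen $4$-sets to manufacture non-degenerate intermediates (or, in the worst case, to force $\mu=1$ on one of the two original triangles, a contradiction). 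This can be done, but it is several lines of case analysis that your sketch waves at; note that the published proof's own chaining is also terse on exactly this point, treating ``Case$(p)$'' as single-valued even though degenerate triangles lie in both cases, so this is the one place where your plan (and the paper) most needs to be made airtight.
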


We note that, in the particular case of $4<|\Lambda|<\infty$, condition (\ref{principle_minor_2_by_2_main_eqn}) on its own implies both of Loewy's conditions (from the two earlier bullet points): for the second bullet point, by looking up in \cite{loewy1986principal} the more precise formulation of the condition, one immediately sees this implication; for the first, suppose for a contradiction that $A$ is reducible and pick a row or column with three zeros (possible for $n\geq 5$); then at least two of these zeros with two other entries of $A$ show that (\ref{principle_minor_2_by_2_main_eqn}) does not hold. With that said, we should clarify that while our main result is related to a particular case of Loewy's result, the approach taken here is substantially different. In particular, our proof relies only on elementary arguments and avoids the more technical linear-algebraic machinery used by Loewy. This simplification is made possible by the stronger hypothesis assumed here regarding nowhere-zero functions, which is essential for applying the key tools developed in Section~\ref{section_graph_theory_tools}. Without this assumption, certain steps — specifically involving division by zero — would break down, and our method would not be viable. We believe that this elementary approach offers a more accessible perspective on the result and may be adaptable to other contexts.

	For completeness' sake, we give a few more words about DPPs without going too far out of focus from the paper's purpose. In particular, we note that besides discrete, there are also DPPs defined in the continuum, which have also been extensively studied; cf., \cite{hough2009zeros, soshnikov2000determinantal}. In that setting, there are, not a finite number of points which the DPP models, but uncountably-many. As such, it is clear that there can no longer be any square matrix of finite size that can characterize the DPP. Instead, it is now a measurable function, known as the \textit{correlation function}, that characterizes the DPP, which is of a special ``determinantal form". As one can imagine (and can indeed read from the above-cited texts) there are various additional measure-theoretic considerations that inevitably appear in the definition of such objects. For these very reasons it should be recognized that, in spite of the fact that it was of original interest, the problem Stevens' paper \cite{equiv_symm_kernels_for_dpps} introduces (and which we further investigate here) is independent of the theory of DPPs.
	
	The rest of the paper is structured as follows. We begin, in Section~\ref{counterexample_sec}, by discussing why the conjecture of \cite{equiv_symm_kernels_for_dpps} cannot hold in the general setting, and we then provide some insight into the hypotheses of our Theorem~\ref{my_main_thm}. In Section~\ref{sec_notation_and_terminology} we define and explain the various graph-theoretic objects we use to study the problem. In Section~\ref{section_short_pf} we provide a short proof of the main result of \cite{equiv_symm_kernels_for_dpps} with the additional assumption that the functions at hand, besides symmetric, are also nowhere-zero and the characteristic of the underlying field is not $2$. We do this by utilising ``a shortcut" given by Proposition~\ref{suff_conditions_for_cocycle_property}. Doing so gives us the opportunity to illustrate some of the techniques of said paper which we also deploy in proving our Theorem~\ref{my_main_thm}, and which were a source of inspiration for the conception of the novel techniques of the present paper. In Section~\ref{section_graph_theory_tools} we describe three simple algebraic identities using graphs, which our proof of the main result is essentially built upon. Lastly, in Section~\ref{section_proof_of_my_theorem}, by using the identities and graph-theoretic setting from Section~\ref{section_graph_theory_tools} we discover an underlying combinatorial structure in the problem; we derive various relations between determinantally equivalent functions, and we then use them to prove our theorem.
	
	\section{Counterexamples to the conjecture} \label{counterexample_sec}
	
	For reasons that will be explained later on in the section, our analysis is restricted to functions $K,Q:\Lambda\to\mathbb{F}$ that are nowhere-zero except possibly on the set $\{(x,x):x\in \Lambda\}$. A simple and canonical counterexample to the conjecture of \cite{equiv_symm_kernels_for_dpps} is the following: Let $\Lambda$ be a set such that $|\Lambda|=4$ - and so then the problem is actually none other than the linear-algebraic one between two matrices described in the introductory section of the paper. For simplicity, let us assume $\Lambda=\{1,2,3,4\}$. Consider the functions/matrices $K:\Lambda^2\to\mathbb{F}$ and $Q:\Lambda^2\to\mathbb{F}$ defined by
	
	\begin{equation}
		\label{example_H_J_matrices}
		(K(x,y))_{1\leq x,y\leq 4}=\begin{pmatrix}
			a & b & 1 & 1 \\
			c & d & 1 & 1 \\
			1 & 1 & e & f \\
			1 & 1 & g & h
		\end{pmatrix}, \qquad (Q(x,y))_{1\leq x,y\leq 4}=\begin{pmatrix}
			a & c & 1 & 1 \\
			b & d & 1 & 1 \\
			1 & 1 & e & f \\
			1 & 1 & g & h
		\end{pmatrix},
	\end{equation}
for some $b, c, f, g\in\mathbb{F}\setminus{\{0\}}$ and $a, d, e, h\in\mathbb{F}$ such that $c\neq b$ and $f\neq g$. In this case, one finds, through a straightforward application of the formula for determinants of block matrices, that $K$ is indeed determinantally equivalent to $Q$. However, one will also find that $K$ cannot be obtained from $Q$ by applying transposition and conjugation transformations to $Q$. Thus, the conjectured classification of transformations does not hold: we have just identified one additional type of transformation from the two conjectured which yields determinantally equivalent functions; perhaps a suitable name for it would be a ``\textit{partial transposition transformation}", since only \textit{part} of the matrix is being transposed. Said in linear-algebraic language, while the above two matrices have equal corresponding principle minors, they are not diagonally similar (up to a transposition), but only ``partially".
	
	 Our first step is to seek for a condition to impose on an arbitrary pair of determinantally equivalent functions that rules out the pair $K$ and $Q$ of the previously-described block forms. The sub-matrices of ones in the upper-right and lower-left blocks of the above two block matrices are the ones that bring about the block structure that enables this ``partial transposition transformation" discussed previously to be a viable transformation that transforms the two matrices with equal corresponding principle minors into one another. And so by far the most intuitive condition to impose on our pair of matrices that we hope would be enough to make this type of transformation non-permissible would be to require non-zero determinants in the upper-right and lower-left $2\times 2$ sub-matrix regions, that is, to require $$\begin{vmatrix} Q(1,3) & Q(1,4) \\ Q(2,3) & Q(2,4) \end{vmatrix}\neq 0,\quad \begin{vmatrix} Q(3,1) & Q(3,2) \\ Q(4,1) & Q(4,2) \end{vmatrix} \neq 0,$$
	which, in our current case of $|\Lambda|=4$, is, up to simultaneous permutations of rows and columns, the same as requiring for every pairwise distinct $x,y,z,w\in\Lambda$,
	\begin{equation*}
		\begin{vmatrix}
			Q(x,y) & Q(x,w) \\
			Q(z,y) & Q(z,w)
		\end{vmatrix}\neq 0.
	\end{equation*}
 In our paper, we show that this condition, in conjunction with the nowhere-zero condition stated earlier, in fact, are sufficient in solving the conjecture for a general abstract set $\Lambda$.

It remains to explain the reason for our `nowhere-zero functions' requirement. Plainly, it all comes down to the fact that two of the three algebraic identities involving $3$-cycles and $4$-cycles (cf., Section~\ref{section_graph_theory_tools}), which our proof of Theorem~\ref{my_main_thm} in Section~\ref{section_proof_of_my_theorem} is essentially built upon, involve divisors that are of the form $h(x,y)$, where $x\neq y$, with $h=K$ or $h=Q$; in which case, the condition $h(x,y)\neq 0$ is, of course, necessary.

After the statement of Theorem~\ref{my_main_thm} we had remarked that in the case of $4<|\Lambda|<\infty$, condition (\ref{principle_minor_2_by_2_main_eqn}) on its own implies both of Loewy's sufficient conditions from \cite{loewy1986principal}. So, although the proof we give in the paper certainly requires the `nowhere-zero functions' condition, it remains an open question whether the proof can be adapted to require weaker assumptions.
	
	\section{Terminology and notations}
	\label{sec_notation_and_terminology}
	For the most part, we will be abiding to standard graph-theoretic notations and terminology. Of course, some terminology in graph theory will always inevitably vary among authors. For this reason, to avoid confusion, we take the time in this section to carefully go through and explain the technical language we employ in the forthcoming sections, most of which will evade many of the fine and highly-specific technicalities that are commonly found in graph-theoretic literature, as we will simply not be needing them for this paper.
	
\begin{definition}
Let $\mathcal{M}$ be a (possibly infinite) set. For an integer $1\leq n\leq |\mathcal{M}|$, we define a \textit{cycle of length $n$ in $\mathcal{M}$} (or an \textit{$n$-cycle in $\mathcal{M}$}, for short) to be an $(n+1)$-tuple of the form $p=(p_0,p_1,\ldots,p_n)\in\mathcal{M}^{n+1}$, where each of the $p_i$ are distinct except for $p_0=p_n$. We will refer to the $p_i$ as the \textit{vertices of the cycle $p$}.
\end{definition}

\begin{definition}
Let $\mathcal{M}$ be a (possibly infinite) set and fix an integer $1\leq n\leq |\mathcal{M}|$. Given an $n$-cycle $p=(p_0,p_1,\ldots,p_n)$ in $\mathcal{M}$, we denote by $p'\coloneqq (p_n,p_{n-1},\ldots,p_0)$ the \textit{cycle $p$ in reverse}.
\end{definition}

\begin{definition}
Let $\mathcal{M}$ be a (possibly infinite) set and fix an integer $1\leq n\leq |\mathcal{M}|$. In this paper, we will occasionally speak of \textit{undirected $n$-cycles in $\mathcal{M}$}; by this we will mean $n$-cycles in $\mathcal{M}$ \textit{up to reversion}. 
\end{definition}

As a simple example of the previous definition, consider the case when $\mathcal{M}=\{1,2,3,4\}$: there are exactly \textit{six} $4$-cycles in $\mathcal{M}$, namely $(1,2,3,4,1)$, $(1,4,3,2,1)$, $(1,2,4,3,1)$, $(1,3,4,2,1)$, $(1,3,2,4,1)$, $(1,4,2,3)$; but there are only \textit{three} undirected $4$-cycles in $\mathcal{M}$. The below figure makes this clear.

\begin{tikzpicture}[scale=0.9]
	\Vertex[label=$1$]{A} \Vertex[x=5,label=$4$]{D} \Vertex[x=1,y=2, label=$2$]{B} \Vertex[x=4,y=2, label=$3$]{C}
	\Edge(A)(B)
	\Edge(B)(C)
	\Edge(C)(D)
	\Edge(D)(A)
\end{tikzpicture}
\hspace{1pt}
\begin{tikzpicture}[scale=0.9]
	\Vertex[label=$1$]{A} \Vertex[x=5,label=$4$]{D} \Vertex[x=1,y=2, label=$2$]{B} \Vertex[x=4,y=2, label=$3$]{C}
	\Edge(A)(C)
	\Edge(D)(B)
	\Edge(B)(A)
	\Edge(C)(D)
\end{tikzpicture}
\hspace{1pt}
\begin{tikzpicture}[scale=0.9]
	\Vertex[label=$1$]{A} \Vertex[x=5,label=$4$]{D} \Vertex[x=1,y=2, label=$2$]{B} \Vertex[x=4,y=2, label=$3$]{C}
	\Edge(A)(D)
	\Edge(D)(B)
	\Edge(B)(C)
	\Edge(A)(C)
\end{tikzpicture}
 
	\begin{definition}
	Let $\Lambda$ be a (possibly infinite) set and let $\mathbb{F}$ be a field. Fix an integer $1\leq n\leq|\Lambda|$. For a two-variable function $h:\Lambda^2\to\mathbb{F}$ and an $n$-cycle, $p=(p_0,\ldots,p_n)$, in $\Lambda$, we denote by $h[p]$ the product
		\begin{equation*}
			h[p]\coloneqq\prod_{i=1}^n h(p_{i-1},p_i);
		\end{equation*}
		and by $h'[p]$ the analogous product with respect to the reverse cycle $p'$:
		\begin{equation*}
			h'[p]\coloneqq\prod_{i=1}^n h(p_i,p_{i-1}) \enspace(=h[p']).
		\end{equation*}
	\end{definition}
	
	\begin{remark}
		Let us define the function $\tau:\{0,1,\ldots,n\}\to\{0,1,\ldots,n-1\}$ to be the following shift operator:
		\begin{equation*}
			\tau(k)\equiv k+1 \pmod{n},\qquad k\in\{0,1\ldots,n\}.
		\end{equation*}
		Though this is a trivial clarification, it is worth pointing out that for an $n$-cycle $p=(p_i)_{i=0}^n$ and any $j\in\mathbb{N}$, the $n$-cycle $(p_{\tau^j(i)})_{i=0}^n$, where $\tau^j$ denotes the $j$-th iteration of the shift operator $\tau$, is indistinguishable from $p$ in the sense that it describes the exact same graph-theoretic object; and obviously we have $h[p]=h[(p_{\tau^j(i)})_{i=0}^n]$ for any two-variable function $h$. 
		
	\end{remark}
	
	\begin{definition}
		Let $\Lambda$ be a (possibly infinite) set and let $\mathbb{F}$ be a field. Fix an integer $1\leq n\leq|\Lambda|$. We say that a two-variable function $c:\Lambda^2\to\mathbb{F}$ satisfies the \textit{cocycle property for $n$-cycles} if for every $n$-cycle $p$ in $\Lambda$, $c[p]=1$. 
		
		If $c$ satisfies the cocycle property for cycles in $\Lambda$ of every length $n$, where $1\leq n \leq |\Lambda|$, we say that $c$ is a (full) \textit{cocycle function}. Equivalently, $c$ is a cocycle function if for all $z,w\in\Lambda$, $c(z,z)=1$ and $c(z,w)c(w,z)=1$, and for all integers $r>2$ and every $r$-tuple $(z_1,\ldots,z_r)\in\Lambda^r$,
		\begin{equation}
			\label{cocycle_property}
			c(z_1,z_2)c(z_2,z_3)\cdots c(z_{r-1},z_r)c(z_r,z_1)=1.
		\end{equation}
	\end{definition}

	\section{Proof of the symmetric case} \label{section_short_pf}
	In addition to our own, in later sections, we have employed and taken inspiration from some of the tools and techniques from \cite{equiv_symm_kernels_for_dpps}. Thus, we believe it is worth starting off by giving a short proof of said paper's main result under the further assumption that the functions at hand are nowhere-zero (except perhaps on the set  $\{(x,x):x\in\Lambda\}$). In this way, we hope the reader gets a clear illustration of the main concepts in the simpler symmetric setting before we move on to extend them to the more involved general setting. We emphasize that the techniques in this section (in particular, the proof of Proposition~\ref{suff_conditions_for_cocycle_property}) are fairly standard. 
	
	We assume throughout this section that the characteristic of the field $\mathbb{F}$ is \textit{not} two. The result is stated as follows.
	
	\begin{theorem}[Modified Theorem 1.5 from \cite{equiv_symm_kernels_for_dpps}]
		\label{thm_from_symm_equiv_kernels}
		Let $\Lambda$ be a set, $\mathbb{F}$ a field, and let $K,Q:\Lambda^2\to\mathbb{F}$ be symmetric functions that are nowhere-zero except perhaps on the set $\{(x,x):x\in\Lambda\}$. If $K$ and $Q$ are determinantally equivalent (i.e., equation (\ref{equal_determinants_relation}) holds), then it must be the case that $K$ and $Q$ are conjugation transformations of one another.
	\end{theorem}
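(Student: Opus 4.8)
The plan is to reduce the statement to an application of the ``shortcut'' Proposition~\ref{suff_conditions_for_cocycle_property}. Introduce the \emph{ratio function} $\phi:\Lambda^2\to\mathbb{F}$ by setting $\phi(x,y):=Q(x,y)K(x,y)^{-1}$ for $x\neq y$ (this is well-defined and non-zero, since $K$ and $Q$ are non-zero off the diagonal) and $\phi(x,x):=1$. Observe that the desired conclusion --- that $Q$ is a conjugation transformation of $K$, i.e.\ that (\ref{conjugation_transf}) holds for some non-zero $g:\Lambda\to\mathbb{F}$ --- is, after dividing through by $K(x,y)$, exactly the assertion that $\phi(x,y)=g(x)g(y)^{-1}$ for all $x,y$. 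For the off-diagonal pairs this will follow once $\phi$ is shown to be a (full) cocycle function: fixing a base point $x_0\in\Lambda$ (the claim being vacuous if $\Lambda=\emptyset$) and putting $g(x):=\phi(x,x_0)$, the cocycle identities for $2$- and $3$-cycles give $\phi(x,y)=\phi(x,x_0)\phi(y,x_0)^{-1}=g(x)g(y)^{-1}$. The diagonal is handled separately: the $n=1$ instance of (\ref{equal_determinants_relation}) forces $Q(x,x)=K(x,x)$, so $Q(x,x)=g(x)g(x)^{-1}K(x,x)$ holds automatically.

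So it remains to prove $\phi$ is a cocycle function, and by Proposition~\ref{suff_conditions_for_cocycle_property} it suffices to verify the cocycle property for $1$-, $2$- and $3$-cycles. The $1$-cycle case is the definition $\phi(x,x)=1$. For $2$-cycles, the $n=1$ and $n=2$ instances of (\ref{equal_determinants_relation}) give identity (\ref{eqn_2by2_1by1}), $K(x,y)K(y,x)=Q(x,y)Q(y,x)$, hence $\phi(x,y)\phi(y,x)=1$ for distinct $x,y$; in the symmetric setting this reads $K(x,y)^2=Q(x,y)^2$.

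The one substantive step is the $3$-cycle case, and the idea is that the $3\times 3$ determinant identity, after the $2\times2$ identities are subtracted off, collapses to a single multiplicative relation. Fix distinct $x,y,z\in\Lambda$, write $a,b,c$ for the (equal, by the $n=1$ case) diagonal entries $K(x,x)=Q(x,x)$, etc., and $p=K(x,y),q=K(x,z),r=K(y,z)$, $p'=Q(x,y),q'=Q(x,z),r'=Q(y,z)$. Expanding the symmetric $3\times3$ determinants over $\{x,y,z\}$ gives $\det K=abc-ar^{2}-bq^{2}-cp^{2}+2pqr$ and likewise $\det Q=abc-ar'^{2}-bq'^{2}-cp'^{2}+2p'q'r'$. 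By the $2$-cycle identities $p^{2}=p'^{2}$, $q^{2}=q'^{2}$, $r^{2}=r'^{2}$, so all the ``diagonal-times-square'' monomials agree; equating $\det K=\det Q$ leaves $2pqr=2p'q'r'$. If $\operatorname{char}\mathbb{F}=2$ there is nothing to do, since $p^{2}=p'^{2}$ already forces $p=p'$ (and similarly $q=q'$, $r=r'$), whence $Q=K$ and $g\equiv 1$ works; otherwise we obtain $K(x,y)K(x,z)K(y,z)=Q(x,y)Q(x,z)Q(y,z)$, i.e.\ $\phi(x,y)\phi(y,z)\phi(z,x)=1$ after dividing and using $\phi(x,z)=\phi(z,x)$. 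This completes the verification, and Proposition~\ref{suff_conditions_for_cocycle_property} then yields $g$.

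I expect the only real ``work'' to be the clean bookkeeping of the $3\times3$ expansion and spotting that the quadratic monomials cancel, plus isolating the characteristic-$2$ degeneracy; conceptually there is nothing deep here once Proposition~\ref{suff_conditions_for_cocycle_property} is available, which is exactly why this symmetric case has a short proof and serves merely as a warm-up. In the general setting of Theorem~\ref{my_main_thm}, by contrast, the failure of (\ref{luxury_eqn}) and the loss of symmetry make both the extraction of such multiplicative relations and the passage from local ($2$-, $3$-cycle) to global cocycle identities substantially more delicate, which is where the bulk of the paper's effort will go.
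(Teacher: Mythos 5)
Your proof follows essentially the same route as the paper's: form the ratio function, verify the cocycle property for $1$-, $2$- and $3$-cycles via the $n=1,2,3$ determinant identities, and conclude through Propositions~\ref{suff_conditions_for_cocycle_property} and~\ref{conj_transf_cocycle_transf_equivalence}; your direct expansion of the symmetric $3\times3$ determinant is just the Leibniz bookkeeping the paper carries out permutation-by-permutation. The one genuine addition is your characteristic-$2$ case: the paper's argument ends at $2K(x_1,x_2)K(x_2,x_3)K(x_3,x_1)=2Q(x_1,x_2)Q(x_2,x_3)Q(x_3,x_1)$ and tacitly cancels the $2$, so your observation that when $\operatorname{char}\mathbb{F}=2$ the squared $2$-cycle identities already force $K=Q$ (so $g\equiv1$ works) patches a small gap the paper's own proof leaves open.
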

	
	\begin{definition}
		If $Q:\Lambda^2\to\mathbb{F}$ and $K:\Lambda^2\to\mathbb{F}$ are functions that satisfy
		\begin{equation*}
			Q(x,y)=c(x,y)K(x,y)\qquad\forall x,y\in\Lambda
		\end{equation*}
		for some cocycle function $c:\Lambda^2\to\mathbb{F}$, we say that $Q$ is a \textit{cocycle transformation} of $K$.
	\end{definition}
	As remarked in \cite{equiv_symm_kernels_for_dpps}, we make the following key observation.
	\begin{proposition}
		\label{conj_transf_cocycle_transf_equivalence}
		Let $\Lambda$ be a set, $\mathbb{F}$ a field, and let $K:\Lambda^2\to\mathbb{F}$ and $Q:\Lambda^2\to\mathbb{F}$ be two (not necessarily nowhere-zero) functions of two variables. Then, $Q$ is a conjugation transformation of $K$ if and only if $Q$ is a cocycle transformation of $K$.
	\end{proposition}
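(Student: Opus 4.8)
The plan is to prove the two implications separately, moving between the ratio description $c(x,y)=g(x)g(y)^{-1}$ of a conjugation transformation and the closed-walk description (\ref{cocycle_property}) of a cocycle; the whole argument is a telescoping computation together with a base-point construction.

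First I would handle the easy direction. Suppose $Q$ is a conjugation transformation of $K$, witnessed by a nowhere-zero $g:\Lambda\to\mathbb{F}$ with $Q(x,y)=g(x)g(y)^{-1}K(x,y)$. Setting $c(x,y)\coloneqq g(x)g(y)^{-1}$ (well defined since $g$ never vanishes) we immediately get $Q=cK$, and for any $r\geq 1$ and any $(z_1,\dots,z_r)\in\Lambda^r$ the product $c(z_1,z_2)c(z_2,z_3)\cdots c(z_r,z_1)=g(z_1)g(z_2)^{-1}g(z_2)g(z_3)^{-1}\cdots g(z_r)g(z_1)^{-1}$ telescopes to $1$, so $c$ is a cocycle function and $Q$ is a cocycle transformation of $K$.

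For the converse I would start by extracting, from the definition of a cocycle function, the two low-order identities that hold for arbitrary (not necessarily distinct) arguments: the case $r=1$ gives $c(x,x)=1$, and $r=2$ gives $c(x,y)c(y,x)=1$; in particular $c$ never vanishes and $c(y,x)=c(x,y)^{-1}$. Assuming $\Lambda\neq\varnothing$ (the empty case being vacuous), I would fix a base point $x_0\in\Lambda$ and set $g(x)\coloneqq c(x,x_0)$, which is nowhere zero by the $r=2$ identity. The key claim is that $g(x)g(y)^{-1}=c(x,y)$ for all $x,y$: indeed $g(y)^{-1}=c(y,x_0)^{-1}=c(x_0,y)$, so $g(x)g(y)^{-1}=c(x,x_0)c(x_0,y)$, and the $r=3$ instance of (\ref{cocycle_property}) applied to the tuple $(x,x_0,y)$ yields $c(x,x_0)c(x_0,y)c(y,x)=1$, whence $c(x,x_0)c(x_0,y)=c(y,x)^{-1}=c(x,y)$. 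Substituting back gives $Q(x,y)=c(x,y)K(x,y)=g(x)g(y)^{-1}K(x,y)$, so $Q$ is a conjugation transformation of $K$.

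I do not expect a genuine obstacle here; the one point needing care is that all of these cocycle identities must be invoked with possibly-coinciding arguments (for $r=1,2$ to force non-vanishing of $c$, and for $r=3$ on the tuple $(x,x_0,y)$ whose entries need not be distinct), which is precisely why the definition of a cocycle function is phrased as a quantifier over all tuples rather than over honest cycles. It is also worth flagging — since it contrasts with the graph-theoretic arguments used later in the paper — that because $c$ is defined on the whole of $\Lambda^2$ there is no connectivity issue, so a single global base point suffices to reconstruct $g$.
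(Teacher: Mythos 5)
Your proposal is correct and follows essentially the same route as the paper: telescoping for the forward direction, and for the converse fixing a base point $x_0$, setting $g(z)=c(z,x_0)$, and combining the length-$2$ and length-$3$ instances of the cocycle property to recover $c(x,y)=g(x)g(y)^{-1}$. The only differences are cosmetic (which $3$-tuple you apply the cocycle identity to, and your explicit remarks about non-distinct arguments and empty $\Lambda$).
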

	\begin{proof}
		Suppose there exists a nowhere-zero function $g:\Lambda\to\mathbb{F}$ such that $$Q(x,y)=g(x)g(y)^{-1}K(x,y)\quad\text{ $\forall x,y\in\Lambda$.}$$ Consider the function $c:\Lambda^2\to\mathbb{F}$, given by $$c(x,y)=g(x)g(y)^{-1},\qquad x,y\in\Lambda.$$ It is then not difficult to see that $c$ defined in this way is a cocycle function, and hence, that $Q$ is a cocycle transformation of $K$ with respective cocycle function $c$.
		
		Conversely, suppose there exists a cocycle function $c:\Lambda^2\to\mathbb{F}$ such that $$Q(x,y)=c(x,y)K(x,y)\quad\enspace\forall x,y\in\Lambda.$$ Let $x,y\in\Lambda$ and fix an arbitrary $x_0\in\Lambda$. Thanks to the cocycle property, $$c(x,y)c(y,x_0)c(x_0,x)=1,$$ and thus $$c(x,y)=\frac{c(x_0,x)^{-1}}{c(y,x_0)}.$$ But also, again thanks to the cocycle property, we have $$c(x,x_0)=c(x_0,x)^{-1}.$$ Therefore, $$c(x,y)=\frac{c(x,x_0)}{c(y,x_0)}.$$ Consider the nowhere-zero function $g:\Lambda\to\mathbb{F}$, given by $$g(z)=c(z,x_0),\qquad z\in\Lambda.$$
		It is then clear that $Q$ is a conjugation transformation of $K$ with conjugation function $g$. \qed 
	\end{proof}
	
	We now state and prove the ``shortcut result" we had mentioned earlier in the introduction.
	\begin{proposition}
		\label{suff_conditions_for_cocycle_property}
		Let $\Lambda$ be a set and $\mathbb{F}$ a field. If a two-variable function $c:\Lambda^2\to\mathbb{F}$ satisfies
		\begin{enumerate}[\itshape(i)]
			\item{$c(x,x)=1$ for every $x\in\Lambda$;} \label{property_i_of_suff_cocycle}
			\item{$c(x,y)c(y,x)=1$ for every $x,y\in\Lambda$;} \label{property_ii_of_suff_cocycle}
			\item{$c(x,y)c(y,z)c(z,x)=1$ for every $x,y,z\in\Lambda$,} \label{property_iii_of_suff_cocycle}
		\end{enumerate}
		then $c$ is a cocycle function.
		
		In other words, $c$ satisfying the cocycle property for cycles of lengths $1$, $2$ and $3$ is both a necessary and sufficient condition for $c$ to be a (full) cocycle function, that is, for it to satisfy the cocycle property for cycles of any length.
	\end{proposition}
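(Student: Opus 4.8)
The plan is to induct on the cycle length $r$. The base cases $r=1,2,3$ are exactly the hypotheses (i), (ii), (iii), so there is nothing to prove there. For the inductive step, suppose $r \geq 4$ and that $c$ satisfies the cocycle property for all cycles of length strictly less than $r$; I want to show $c[p] = 1$ for an arbitrary $r$-cycle $p = (z_1, z_2, \ldots, z_r, z_1)$.

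The key idea is to "split" the $r$-cycle into two shorter cycles by inserting a chord. Pick any vertex not adjacent in the cycle to $z_1$ — say $z_3$ (this exists since $r \geq 4$, so $z_3 \neq z_r$ and $z_3 \neq z_1$). Then consider the two cycles obtained by cutting along the chord between $z_1$ and $z_3$: namely the $3$-cycle $(z_1, z_2, z_3, z_1)$ and the $(r-1)$-cycle $(z_1, z_3, z_4, \ldots, z_r, z_1)$. Applying the inductive hypothesis to each (the first has length $3 < r$, the second has length $r-1 < r$) gives
\begin{equation*}
	c(z_1,z_2)c(z_2,z_3)c(z_3,z_1) = 1, \qquad c(z_1,z_3)c(z_3,z_4)\cdots c(z_{r-1},z_r)c(z_r,z_1) = 1.
\end{equation*}
Multiplying these two relations together, the factors $c(z_3, z_1)$ and $c(z_1, z_3)$ combine into $c(z_1,z_3)c(z_3,z_1)$, which equals $1$ by hypothesis (ii). What remains after cancelling this pair is precisely $c(z_1,z_2)c(z_2,z_3)c(z_3,z_4)\cdots c(z_r,z_1) = c[p]$, so $c[p] = 1$, completing the induction.

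One caveat to handle cleanly: the vertices $z_1, \ldots, z_r$ in a cycle are required to be distinct (our cycles are simple), so the chord endpoints $z_1$ and $z_3$ are genuinely distinct and the two sub-cycles $(z_1,z_2,z_3,z_1)$ and $(z_1,z_3,\ldots,z_r,z_1)$ are legitimate cycles in $\Lambda$ in the sense of the definitions — each visits distinct vertices and then returns. I do not foresee a real obstacle here; the argument is a routine strong induction, and the only thing to be careful about is bookkeeping the telescoping of the product and invoking (ii) for the single cancellation. The necessity direction ("in other words...") is immediate, since a full cocycle function satisfies the cocycle property for every length, in particular for $1$, $2$, and $3$.
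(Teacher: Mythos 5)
Your proof is correct and is essentially the paper's own argument: a strong induction in which the cycle is split along the chord $z_1$--$z_3$ into a $3$-cycle handled by hypothesis \emph{(iii)} and a shorter cycle handled by the inductive hypothesis, with hypothesis \emph{(ii)} cancelling the pair $c(z_1,z_3)c(z_3,z_1)$. The only cosmetic difference is that you multiply two identities and cancel, whereas the paper inserts the factor $c(z_3,z_1)c(z_3,z_1)^{-1}$ into the product and simplifies.
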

	\begin{proof}
	 By (\ref{property_i_of_suff_cocycle}) and (\ref{property_ii_of_suff_cocycle}) of the hypothesis, we already have for all $z,w\in\Lambda$, $c(z,z)=1$ and $c(z,w)c(w,z)=1$. We proceed by induction on $r\geq3$ in (\ref{cocycle_property}). The base case $r=3$ is already satisfied by $c$ thanks to (\ref{property_iii_of_suff_cocycle}) of the hypothesis. We now move on to the inductive step. To this end, let $(z_1,\ldots,z_{r+1})\in\Lambda^{r+1}$ be an $(r+1)$-tuple. Then,
		\begin{align*}
			&c(z_1,z_2)c(z_2,z_3)\cdots c(z_{r},z_{r+1})c(z_{r+1},z_1) \\ &=c(z_1,z_2)c(z_2,z_3)c(z_3,z_1) c(z_{3},z_1)^{-1} \Big( \prod_{i=4}^{r+1} c(z_{i-1},z_i) \Big) c(z_{r+1},z_1)\\
			&=c(z_1,z_3)\Big( \prod_{i=4}^{r+1} c(z_{i-1},z_i) \Big) c(z_{r+1},z_1)&\text{(by (\ref{property_iii_of_suff_cocycle}) and (\ref{property_ii_of_suff_cocycle}))} \\
			&=1,
		\end{align*}
		where the last equality is due to the inductive hypothesis, since $(z_1,z_3,z_4,\ldots,z_{r+1})\in\Lambda^{r}$ is an $r$-tuple. \qed
	\end{proof}
	
	We are now in the position to provide a short proof of the (modified) main result of \cite{equiv_symm_kernels_for_dpps}:
	\begin{proof}[Proof of Theorem~\ref{thm_from_symm_equiv_kernels}]
		Consider the function $S:\Lambda^2\to\mathbb{F}$, given by $$S(x,y)=\begin{cases*} \frac{Q(x,y)}{K(x,y)}, &\text{if $x\neq y$} \\
			1, &\text{if $x=y$}\end{cases*},\qquad x,y\in\Lambda.$$ Since $Q(x,y)=S(x,y)K(x,y)$ for every $x,y\in\Lambda$, if we can show that $S$ is a cocycle function, the result will follow immediately from Proposition~\ref{conj_transf_cocycle_transf_equivalence}.
		
		By equation (\ref{equal_determinants_relation}) with $n\in\{1,2\}$, $S$ satisfies conditions (\ref{property_i_of_suff_cocycle}) and (\ref{property_ii_of_suff_cocycle}) of Proposition~\ref{suff_conditions_for_cocycle_property}. To see that property (\ref{property_iii_of_suff_cocycle}) of the proposition is also satisfied, we make use of equation (\ref{equal_determinants_relation}) with $n=3$ and the Leibniz formula for determinants: let $x_1,x_2,x_3\in\Lambda$ be pairwise distinct, then
		\begin{equation}
			\label{leibniz_equality_for_3_by_3}
			\sum_{\sigma\in S_3} \text{sgn}(\sigma) \prod_{i=1}^3 K(x_i,x_{\sigma(i)})=\sum_{\sigma\in S_3} \text{sgn}(\sigma) \prod_{i=1}^3 Q(x_i,x_{\sigma(i)}).
		\end{equation}
	If a permutation $\sigma\in S_3$ fixes a point, then since $S$ satisfies properties (i) and (ii) of Proposition~\ref{suff_conditions_for_cocycle_property} we have
		\begin{equation}
			\label{eq}
			\prod_{i=1}^3 K(x_i,x_{\sigma(i)}) = \prod_{i=1}^3 Q(x_i,x_{\sigma(i)}).
		\end{equation}
 We can then subtract these terms from (\ref{leibniz_equality_for_3_by_3}) and be left with the terms that come from the permutations $\sigma=(1 2 3)$ and $\sigma'=(3 2 1)$; in which case, (\ref{leibniz_equality_for_3_by_3}) simplifies to
		\begin{equation*}
			K(x_1,x_2)K(x_2,x_3)K(x_3,x_1)+K(x_3,x_2)K(x_2,x_1)K(x_1,x_3)
		\end{equation*}
		\begin{equation}
			\label{leibniz_3_by_3_equality}
			=
		\end{equation}
		\begin{equation*}
			Q(x_1,x_2)Q(x_2,x_3)Q(x_3,x_1)+Q(x_3,x_2)Q(x_2,x_1)Q(x_1,x_3),
		\end{equation*}
		which, by the symmetry assumptions on both $K$ and $Q$, further simplifies to
		\begin{equation}
			\label{leibniz_3_by_3_equality_symm}
			2K(x_1,x_2)K(x_2,x_3)K(x_3,x_1)=2Q(x_1,x_2)Q(x_2,x_3)Q(x_3,x_1).
		\end{equation}
		Hence, property (\ref{property_iii_of_suff_cocycle}) of Proposition~\ref{suff_conditions_for_cocycle_property} is also satisfied by $S$. \qed
	\end{proof}

	\section{Cycles on $4$ vertices} \label{section_graph_theory_tools}

As alluded to in Section~\ref{section_short_pf}, cycles will also play a key role in deriving the main result of this paper. In essence, as we shall see in the final section, the proof of our Theorem~\ref{my_main_thm} comes down to establishing the cocycle property for cycles of length $3$ in $\Lambda$ of one of two particular functions of two variables. Foreshadowing the proof of Theorem~\ref{my_main_thm} some more, in addition to equation (\ref{equal_determinants_relation}) with $n=3$, we will also be making use of equation (\ref{equal_determinants_relation}) with $n=4$. This, along with the usual application of the Leibniz formula for determinants, will mean that cycles of length $4$ will also enter the scene. Finding suitable ways to link such cycles with ones that are of lengths $2$ and $3$ will allow us to make use of various identities already in our possession from the previous section regarding determinantally equivalent functions, as well as some additional ones we establish later on. This is purely a graph-theoretic matter and is what the remainder of this section is concerned with. 
	
	For the remainder of this section, we set $\mathcal{M}\coloneqq\{1,2,3,4\}$. We had explained in Section~\ref{sec_notation_and_terminology} that there are exactly six distinct directed $4$-cycles in the set $\mathcal{M}$. It is not difficult to list them, for there are only three distinct undirected $4$-cycles in $\mathcal{M}$, and each has exactly two possible directions for its trail.

	We will use the following labelling for the $4$-cycles in $\mathcal{M}$ henceforth:
\begin{equation}
	\label{four_cycle_labellings}
q^{[1]}\coloneqq (1,2,3,4,1),\qquad q^{[2]}\coloneqq (1,2,4,3,1),\qquad q^{[3]}\coloneqq (1,3,2,4,1).
\end{equation}
	In the same way, we also know that there are exactly eight distinct directed $3$-cycles in $\mathcal{M}$; we will use the following labelling henceforth:
	\begin{equation}
		\label{three_cycle_labellings}
		p^{(1)}\coloneqq (1,2,3,1),\qquad p^{(2)}\coloneqq (1,2,4,1),\qquad p^{(3)}\coloneqq (1,3,4,1),\qquad p^{(4)}\coloneqq (2,3,4,2).
	\end{equation}

	\begin{lemma}
		\label{graph_theory_lemma_3_from_notes}
		Let $\mathbb{F}$ be a field, and let $h:\mathcal{M}^2\to\mathbb{F}$ be a function of two variables. Then,
		\begin{equation}
			\label{graph_theory_lemma_3_from_notes_eq}
			h[q^{[2]}]h[q^{[3]}]=h(1,3)h(3,1)\cdot h[p^{(2)}]h'[p^{(4)}];
		\end{equation}
	and
		\begin{equation}
			\label{graph_theory_lemma_3_from_notes_eq2}
			h[q^{[2]}]h'[q^{[3]}]=h(2,4)h(4,2)\cdot h[p^{(1)}]h'[p^{(3)}].
		\end{equation}
	\end{lemma}
	\begin{proof}
It is not difficult to verify the two equations. \qed
	\end{proof}

	\begin{lemma}
		\label{graph_theory_lemma_1_from_notes}
	Let $\mathbb{F}$ be a field, and let $h:\mathcal{M}^2\to\mathbb{F}$ be a function of two variables that is nowhere-zero except possibly on the set $\{(x,x):x\in\mathcal{M}\}$. Then,
		\begin{equation}
			\label{graph_theory_lemma_1_from_notes_eq1}
			h[q^{[1]}]=\frac{h[p^{(1)}]h[p^{(3)}]}{h(1,3)h(3,1)}=\frac{h[p^{(2)}]h[p^{(4)}]}{h(2,4)h(4,2)}.
		\end{equation}
	\end{lemma}
	\begin{proof}
		In the below figure we provide a pictorial proof of the lemma:
		\begin{figure}[H]
			\centering
			\begin{tikzpicture}
				\Vertex[label=$1$]{A} \Vertex[x=5,label=$4$]{D} \Vertex[x=1,y=2, label=$2$]{B} \Vertex[x=4,y=2, label=$3$]{C}
				\Edge[Direct,color=red](A)(B)
				\Edge[Direct,color=red](B)(C)
				\Edge[Direct,color=blue](C)(D)
				\Edge[Direct,color=blue](D)(A)
				\Edge[Direct,bend=10,color=blue](A)(C)
				\Edge[Direct,bend=10,color=red](C)(A)	
			\end{tikzpicture}
			\hfill
			\begin{tikzpicture}
				\Vertex[label=$1$]{A} \Vertex[x=5,label=$4$]{D} \Vertex[x=1,y=2, label=$2$]{B} \Vertex[x=4,y=2, label=$3$]{C}
				\Edge[Direct,color=red](A)(B)
				\Edge[Direct,color=blue](B)(C)
				\Edge[Direct,color=blue](C)(D)
				\Edge[Direct,color=red](D)(A)
				\Edge[Direct,bend=10,color=red](B)(D)
				\Edge[Direct,bend=10,color=blue](D)(B)		
			\end{tikzpicture} 
			\caption{In the graph on the left, in red is the $3$-cycle $\color{red}p^{(1)}$, and in blue is the $3$-cycle $\color{blue} p^{(3)}$. In the graph on the right, in red is the $3$-cycle $\color{red}p^{(2)}$, and in blue is the $3$-cycle $\color{blue} p^{(4)}$. \qed} 
		\end{figure} 
	\end{proof}

	\begin{lemma}
		\label{graph_theory_lemma_2_from_notes}
		Let $\mathbb{F}$ be a field, and let $h:\mathcal{M}^2\to\mathbb{F}$ be a function of two variables that is nowhere-zero except possibly on the set $\{(x,x):x\in\mathcal{M}\}$. Then,
		\begin{equation*}
			h[p^{(1)}]=\frac{h[p^{(2)}]h'[p^{(3)}]h[p^{(4)}]}{h(4,1)h(1,4)\cdot h(3,4)h(4,3)\cdot h(2,4)h(4,2)}.
		\end{equation*}
	\end{lemma}
	\begin{proof}
		In the below figure we provide a pictorial proof of the lemma.
		\begin{figure}[H]
			\centering
			\begin{tikzpicture}
				\Vertex[label=$3$]{A} \Vertex[x=6,label=$2$]{B}, \Vertex[x=3,y=3,label=$1$]{C} \Vertex[x=3,y=1,label=$4$]{D}
				\Edge[Direct,color=blue](A)(C)
				\Edge[Direct,color=red](C)(B)
				\Edge[Direct,color=green](B)(A)
				\Edge[Direct,color=blue,bend=20](C)(D)
				\Edge[Direct,color=red,bend=20](D)(C)
				\Edge[Direct,color=green,bend=10](A)(D)
				\Edge[Direct,color=blue,bend=10](D)(A)
				\Edge[Direct,color=red,bend=10](B)(D)
				\Edge[Direct,color=green,bend=10](D)(B)
			\end{tikzpicture}
			\caption{In red is the $3$-cycle $\color{red} p^{(2)}$, in green is the $3$-cycle $\color{green} p^{(4)}$, and in blue is the $3$-cycle $\color{blue} p^{(3)}$ in reverse. \qed}
		\end{figure}
	\end{proof}

	\section{Proof of the main result} 
	\label{section_proof_of_my_theorem}
	The backbone of the proof of our main theorem is the following elementary fact.
	\begin{lemma}
		\label{key_trick_lemma}
		Let $a,b,a',b'\in\mathbb{F}$ satisfy
		\begin{equation}
			\label{key_trick_lemma_eq1}
			a+b=a'+b'
		\end{equation}
		and
		\begin{equation}
			\label{key_trick_lemma_eq2}
			ab=a'b'.
		\end{equation}
		Then, we either have $(i)$ $a=a'$ and $b=b'$; or $(ii)$ $a=b'$ and $b=a'$.
	\end{lemma}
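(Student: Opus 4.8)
The plan is to reduce the two symmetric-function constraints to a single factored equation and then invoke the fact that a field has no zero divisors. Concretely, I would use the first equation, \eqref{key_trick_lemma_eq1}, to eliminate one variable: writing $b = a' + b' - a$ and substituting into \eqref{key_trick_lemma_eq2} gives a relation among $a, a', b'$ alone, namely $a(a'+b'-a) = a'b'$.

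The next step is to rearrange this into the form $(a'-a)(a-b') = 0$. Expanding, $a(a'+b'-a) - a'b' = aa' + ab' - a^2 - a'b'$, and grouping the terms as $a(a'-a) + b'(a-a') = (a'-a)(a - b')$ verifies the claim; this is the one small piece of algebra the proof hinges on.

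Since $\mathbb{F}$ is a field, hence an integral domain, the product $(a'-a)(a-b')$ vanishes only if one of the factors does. So I split into two cases. If $a' = a$, then \eqref{key_trick_lemma_eq1} immediately forces $b' = b$, giving conclusion (i). If $a = b'$, then \eqref{key_trick_lemma_eq1} forces $b = a'$, giving conclusion (ii). That exhausts the possibilities and completes the proof.

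There is no real obstacle here — the statement is exactly the content of Vieta's formulas for a monic quadratic (the pairs $\{a,b\}$ and $\{a',b'\}$ are the root multisets of the same polynomial $t^2 - (a+b)t + ab$), and the only structural input needed is the absence of zero divisors in $\mathbb{F}$. I would present the direct factoring argument rather than the polynomial-factorization phrasing, since it keeps the proof self-contained and elementary, in keeping with the paper's stated aim of using only elementary combinatorics and algebra.
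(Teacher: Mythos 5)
Your argument is correct, and it takes a slightly different route from the paper's. The paper proves the lemma through the polynomial viewpoint you mention only in passing: it forms the two monic quadratics $x^2-(a+b)x+ab$ and $x^2-(a'+b')x+a'b'$, observes that (\ref{key_trick_lemma_eq1}) and (\ref{key_trick_lemma_eq2}) make them the same polynomial, and concludes that the root pairs $\{a,b\}$ and $\{a',b'\}$ must coincide --- implicitly relying on the fact that a monic quadratic over a field determines its multiset of roots (unique factorization in $\mathbb{F}[x]$). You instead eliminate $b$ via $b=a'+b'-a$ and factor directly,
\[
a(a'+b'-a)-a'b' \;=\; a(a'-a)+b'(a-a') \;=\; (a'-a)(a-b'),
\]
then finish using only the absence of zero divisors in $\mathbb{F}$ together with (\ref{key_trick_lemma_eq1}) in each of the two cases; that computation is right, and the case analysis is exhaustive. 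The two proofs are really the Vieta identity seen from two sides: yours has the modest advantage of being entirely self-contained (no appeal, even implicit, to uniqueness of roots of a polynomial), while the paper's version is shorter and makes the ``same quadratic, same roots'' intuition explicit. Either argument fully suffices for the lemma.
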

	\begin{proof}
		Consider the two quadratic equations $$x^2-(a+b)x+ab=0$$ and $$x^2-(a'+b')x+a'b'=0.$$
		The first equation has roots $a$ and $b$, and the second equation has roots $a'$ and $b'$. Thanks to equations (\ref{key_trick_lemma_eq1}) and (\ref{key_trick_lemma_eq2}) the two quadratic equations are equal, and so must be the roots. \qed
	\end{proof}

For the same reasons as in Section~\ref{section_short_pf}, to prove Theorem~\ref{my_main_thm} it will suffice to prove that, for the pair of determinantally equivalent functions $K$ and $Q$ from the statement, either the function $S:\Lambda^2\to\mathbb{F},$ given by $$S(x,y)=\begin{cases*}
	\frac{Q(x,y)}{K(x,y)}, &\text{if $x\neq y$} \\
	1, &\text{if $x=y$}
\end{cases*},\qquad x,y\in\Lambda,$$ or the function $\tilde{S}:\Lambda^2\to\mathbb{F},$ given by $$\tilde{S}(x,y)=\begin{cases*}
	\frac{Q(x,y)}{K(y,x)}, &\text{if $x\neq y$} \\
	1, &\text{if $x=y$}
\end{cases*},\qquad x,y\in\Lambda,$$ is a cocycle function.

Utilizing equation (\ref{equal_determinants_relation}) with $n\in\{1,2\}$, it is easy to check that both $S$ and $\tilde{S}$ satisfy the cocycle property for cycles in $\Lambda$ of lengths $1$ and $2$, and that $Q(x,y)=S(x,y)K(x,y)=\tilde{S}(x,y)K(y,x)$ for every $x,y\in\Lambda$. Indeed, for every $x,y\in\Lambda$ distinct, equation (\ref{equal_determinants_relation}) with $n\in\{1,2\}$ yields
\begin{equation*}
K(x,x)=Q(x,x)
\end{equation*}
and
\begin{equation}
\label{eqn_2by2_1by1}
K(x,y)K(y,x)=Q(x,y)Q(y,x).
\end{equation}
Thus, by Proposition~\ref{suff_conditions_for_cocycle_property} it suffices to show that either $S$ or $\tilde{S}$ satisfies the cocycle property for cycles of length $3$ in $\Lambda$. Expressing this in our shorthand notation, we need to prove that either
\begin{equation}
	\label{either_S_is_cocycle}
	S[p]=1\enspace\text{for every $3$-cycle $p\coloneqq (p_i)_{i=0}^3$ in $\Lambda$;}
\end{equation}
or
\begin{equation}
	\label{either_tilde_S_is_cocycle}
	\tilde{S}[p]=1\enspace\text{for every $3$-cycle $p\coloneqq (p_i)_{i=0}^3$ in $\Lambda$.}
\end{equation}

\begin{definition}
Let $\Lambda$ be a set, $\mathbb{F}$ a field, and let $K,Q:\Lambda^2\to\mathbb{F}$ be two functions of two-variables. We say that a $3$-cycle, $p=(p_i)_{i=0}^3$, in $\Lambda$ belongs to Case 1 or Case 2 if
\begin{equation*}
	\text{\textbf{Case 1:} } K[p]=Q[p] \text{ and } K'[p]=Q'[p];
\end{equation*}
or
\begin{equation*}
	\text{\textbf{Case 2:} } K[p]=Q'[p] \text{ and } K'[p]=Q[p],
\end{equation*}
respectively.
\end{definition}

\begin{remark}
Notice how if $K$ and $Q$ in the above definition are determinantally equivalent and nowhere-zero except possibly on the set $\{(x,x):x\in\Lambda\}$, then in both Case 1 and Case 2 the first equality implies the second, and vice versa. For example, if $K[p]=Q[p]$, then
\begin{equation*}
K'[p]=\frac{K[p]\cdot K'[p]}{K[p]}=\frac{Q[p]\cdot Q'[p]}{Q[p]}=Q'[p],
\end{equation*}
where the second equality is an immediate consequence of (\ref{eqn_2by2_1by1}). The remaining implications can be verified using the same argument. This fact is implicitly used in much of the remaining paper.
\end{remark}

We make an important observation:
\begin{lemma}
	\label{prop_about_case1_case2_for_3_cycles}
	Let $\Lambda$ be a set, and $\mathbb{F}$ a field. If functions $K:\Lambda^2\to\mathbb{F}$ and $Q:\Lambda^2\to\mathbb{F}$ are determinantally equivalent, then every $3$-cycle, $p=(p_i)_{i=0}^3$, in $\Lambda$ belongs to at least one of Case 1 or Case 2.
\end{lemma}
\begin{proof}
	As we had seen in Section~\ref{section_short_pf} (recall equation  (\ref{leibniz_equality_for_3_by_3})), equation (\ref{equal_determinants_relation}) with $n=3$ in conjunction with the Leibniz formula for determinants yields
	\begin{equation*}
		\sum_{\sigma\in S_3} \text{sgn}(\sigma) \prod_{i=1}^3 K(p_i,p_{\sigma(i)}) = \sum_{\sigma\in S_3} \text{sgn}(\sigma) \prod_{i=1}^3 Q(p_i,p_{\sigma(i)}).
	\end{equation*}
	By recalling the arguments from the proof of Theorem~\ref{thm_from_symm_equiv_kernels}, we can conclude that
	\begin{equation}
		\label{key_trick_mythm_eq1}
		K[p]+K'[p]=Q[p]+Q'[p].
	\end{equation}
	But, thanks to equation (\ref{eqn_2by2_1by1}), we also have
	\begin{equation}
		\label{key_trick_mythm_eq2}
		K[p]K'[p]=Q[p]Q'[p].
	\end{equation}
	Equations (\ref{key_trick_mythm_eq1}) and (\ref{key_trick_mythm_eq2}) then allow us to apply Lemma~\ref{key_trick_lemma} to complete the proof. \qed
\end{proof}

\begin{remark}
	The conclusion of the above lemma holds true even when the determinantally equivalent functions $K$ and $Q$ are not necessarily nowhere-zero.
\end{remark}
	
	Notice now how, thanks to Lemma~\ref{prop_about_case1_case2_for_3_cycles}, proving that either (\ref{either_S_is_cocycle}) or (\ref{either_tilde_S_is_cocycle}) is satisfied is equivalent to showing that $3$-cycles in $\Lambda$ belong to the same Case. It turns out that it suffices to prove this statement for every subset $\mathcal{M}\subseteq \Lambda$ with $|\mathcal{M}|=4$. More specifically, it suffices to prove the following proposition:
	
	\begin{proposition}
		\label{prop_step_2_lambda_4_elements}
		Let $\Lambda$ be a set and $\mathbb{F}$ be a field. Suppose that functions $K:\Lambda^2\to\mathbb{F}$ and $Q:\Lambda^2\to\mathbb{F}$ are determinantally equivalent and nowhere-zero except possibly on the set $\{(x,x):x\in\Lambda\}$. Suppose further that condition (\ref{principle_minor_2_by_2_main_eqn}) is satisfied. Let $\mathcal{M}\subseteq\Lambda$ be a set such that $|\mathcal{M}|=4$. Then, it is either the case that
		\begin{equation*}
			\text{every $3$-cycle in $\mathcal{M}$ belongs to Case 1; }
		\end{equation*}
		or
		\begin{equation*}
			\text{every $3$-cycle in $\mathcal{M}$ belongs to Case 2.}
		\end{equation*}
	\end{proposition}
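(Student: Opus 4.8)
My plan is to dispose of $|\mathcal{M}|=3$ trivially, reduce the $|\mathcal{M}|=4$ case to a statement about the four $3$-element subsets of $\mathcal{M}$, and then run a double-counting argument on the $n=4$ instance of (\ref{equal_determinants_relation}), fed by lemmas~\ref{graph_theory_lemma_1_from_notes} and~\ref{graph_theory_lemma_2_from_notes}. When $|\mathcal{M}|=3$ there is, up to cyclic shift and reversal, only one $3$-cycle in $\mathcal{M}$, and by the remark following proposition~\ref{prop_about_case1_case2_for_3_cycles} its Case is unchanged under reversal, so all $3$-cycles in $\mathcal{M}$ share a single Case and we are done. So assume $|\mathcal{M}|=4$, write $\mathcal{M}=\{1,2,3,4\}$ and set $S_i:=\mathcal{M}\setminus\{i\}$; by proposition~\ref{prop_about_case1_case2_for_3_cycles} each $S_i$ has a well-defined Case. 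I will call $S_i$ \emph{ambiguous} if $K[p]=K'[p]$ for some (equivalently, every) $3$-cycle $p$ on $S_i$: then lemma~\ref{key_trick_lemma}, applied as in the proof of proposition~\ref{prop_about_case1_case2_for_3_cycles}, forces $Q[p]=Q'[p]=K[p]=K'[p]$, so an ambiguous $S_i$ satisfies the defining equations of \emph{both} Cases and may be declared to be in either one. Thus it suffices to show that all non-ambiguous $S_i$ lie in one common Case; I will suppose not. Since any two $S_i$ meet in exactly two elements, I may relabel $\mathcal{M}$ so that $S_4=\{1,2,3\}$ is non-ambiguous in Case $1$ and $S_2=\{1,3,4\}$ is non-ambiguous in Case $2$.

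The engine is the $n=4$ instance of (\ref{equal_determinants_relation}) on the four points of $\mathcal{M}$. Expanding both sides by the Leibniz formula, the terms from permutations of cycle-type $1^4$, $2\cdot1^2$ or $2^2$ are already equal on the two sides by (\ref{equal_determinants_relation}) at $n=1$ and $n=2$, the terms from permutations of cycle-type $3\cdot1$ cancel in inverse-pairs by (\ref{key_trick_mythm_eq1}), and what is left is the identity $\sum_q K[q]=\sum_q Q[q]$ with $q$ running over the six directed $4$-cycles in $\mathcal{M}$. I would group these six cycles into three reverse-pairs and, for each pair, use lemma~\ref{graph_theory_lemma_1_from_notes} to write its contribution $K[q]+K[q']-Q[q]-Q[q']$ as a single fraction; lemma~\ref{graph_theory_lemma_1_from_notes} offers two such expressions, one for each diagonal of the underlying $4$-cycle, and the numerator of the one attached to a diagonal is assembled from the $3$-cycles on the two $S_i$ that contain that diagonal. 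A short computation using the Case equations and (\ref{eqn_2by2_1by1}) shows this numerator is $0$ whenever those two $S_i$ share a Case or one of them is ambiguous, and otherwise equals $\pm\Delta_{S_i}\Delta_{S_j}$, where $\Delta_{S_k}:=K[p_{S_k}]-K'[p_{S_k}]\neq 0$ precisely when $S_k$ is non-ambiguous. Equating the two diagonal-expressions for the reverse-pair whose underlying $4$-cycle has $\{1,3\}$ as a diagonal: its $\{1,3\}$-numerator is $\Delta_{S_4}\Delta_{S_2}\neq 0$, hence its $\{2,4\}$-numerator is non-zero too, which forces $S_3=\{1,2,4\}$ and $S_1=\{2,3,4\}$ to be non-ambiguous and in opposite Cases. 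After a further relabelling by the transposition $1\leftrightarrow 3$ (which fixes $S_4,S_2$ and swaps $S_3,S_1$) I may take $S_3$ in Case $1$ and $S_1$ in Case $2$, landing in the "checkerboard" configuration: $S_4,S_3$ in Case $1$ and $S_2,S_1$ in Case $2$, all four non-ambiguous.

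From this configuration I would extract two relations involving only the values of $K$, and collide them. First, applying lemma~\ref{graph_theory_lemma_2_from_notes} to $p_{S_4}$ with fourth vertex $4$, doing the same for $Q$, and substituting the four Case equations and (\ref{eqn_2by2_1by1}), gives
$$K(2,3)K(4,2)K(3,1)K(1,4)=K(3,2)K(2,4)K(1,3)K(4,1). \qquad (\star)$$
Second, inserting the Case equations into $\sum_q K[q]=\sum_q Q[q]$ — the reverse-pair whose $4$-cycle has $\{1,2\}$ as a diagonal now contributes $0$, since $S_4$ and $S_3$ share Case $1$ — and cancelling the non-zero factor $\Delta_{S_2}$ produces
$$\bigl(K[p_{S_4}]-K'[p_{S_4}]\bigr)K(1,4)K(4,1)=\bigl(K[p_{S_3}]-K'[p_{S_3}]\bigr)K(1,3)K(3,1). \qquad (\diamond)$$
Writing out $\Delta_{S_4}=K(1,2)K(2,3)K(3,1)-K(2,1)K(3,2)K(1,3)$ and using $(\diamond)$ to eliminate $K(2,1)$ — legitimate because $K(1,3)$, $K(1,4)$ and the minor $\begin{vmatrix}K(3,1)&K(3,2)\\K(4,1)&K(4,2)\end{vmatrix}$ are non-zero, the last by hypothesis (\ref{principle_minor_2_by_2_main_eqn}) — the resulting expression simplifies, by precisely the identity $(\star)$, to $0$. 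Hence $K[p_{S_4}]=K'[p_{S_4}]$, i.e.\ $S_4$ is ambiguous, contradicting our choice of $S_4$; this is the desired contradiction.

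The main obstacle is the bookkeeping of the middle step: one must list the six directed $4$-cycles, record both lemma~\ref{graph_theory_lemma_1_from_notes}-decompositions of each, and verify — signs included — that every numerator collapses to $0$ or to $\pm\Delta_{S_i}\Delta_{S_j}$ exactly according to the Case pattern of the two $3$-subsets attached to the chosen diagonal; the succession of relabelling reductions also needs to be tracked carefully. The final elimination turning $(\star)$ and $(\diamond)$ into $\Delta_{S_4}=0$, by contrast, is a one-line calculation.
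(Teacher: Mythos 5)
Your proposal is correct, but it takes a genuinely different route from the paper's. The paper handles $|\mathcal{M}|=4$ by splitting into lemma~\ref{prop_step_2.1_lambda_4_elements} (three $3$-cycles in a common Case force the fourth, via lemma~\ref{graph_theory_lemma_2_from_notes}) and lemma~\ref{prop_step_2.2_lambda_4_elements}, the latter proved through the chain lemma~\ref{lemma_eq_four_cycle_all_case_identical} -- lemma~\ref{lemma_two_diff_sums_equal} -- lemma~\ref{lemma_two_diff_prods_equal} -- corollary~\ref{cor_final}, which needs all three graph lemmas (in particular lemma~\ref{graph_theory_lemma_3_from_notes} for a \emph{product} identity over two $4$-cycles), a second application of lemma~\ref{key_trick_lemma} to the resulting sum/product pair, a separate counting argument to match the indices in corollary~\ref{cor_final}, and a closing case analysis in which hypothesis (\ref{principle_minor_2_by_2_main_eqn}) appears as a vanishing $2\times 2$ minor of $Q$. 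You bypass lemma~\ref{graph_theory_lemma_3_from_notes}, the product identity, corollary~\ref{cor_final}, and even lemma~\ref{prop_step_2.1_lambda_4_elements}: your ambiguity dichotomy ($K[p]=K'[p]$ forces, via lemma~\ref{key_trick_lemma}, membership in both Cases) reduces the problem to showing all non-ambiguous $3$-subsets share a Case; comparing the two diagonal decompositions of lemma~\ref{graph_theory_lemma_1_from_notes} for a single reverse-pair term inside the $n=4$ identity (\ref{leibniz_formula_4_by_4_evaluated2}) forces the checkerboard configuration; and then your $(\star)$ (from lemma~\ref{graph_theory_lemma_2_from_notes}, the Case equations and (\ref{eqn_2by2_1by1})) collides with your $(\diamond)$ (from the $4$-cycle sum identity) to give $\Delta_{S_4}=0$, with (\ref{principle_minor_2_by_2_main_eqn}) used only to license the two divisions, since the brackets $K(2,3)K(1,4)-K(2,4)K(1,3)$ and $K(3,2)K(4,1)-K(4,2)K(3,1)$ are exactly such minors of $K$. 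I checked the computations you deferred: with the orientations implicit in your displayed formulas (both $\Delta$'s based at $1$ with first edge $(1,2)$), the sign in $(\diamond)$ is indeed $+$, $(\star)$ holds verbatim, your numerator claim ($0$ versus $\pm\Delta_{S_i}\Delta_{S_j}$) is right, and the elimination of $K(2,1)$ does reduce to $(\star)$; be aware the sign is load-bearing, since with the opposite sign the brackets would become sums not covered by (\ref{principle_minor_2_by_2_main_eqn}), so the bookkeeping you flag must indeed be written out. In exchange for that coordinate work, your argument is shorter, uses only lemmas~\ref{graph_theory_lemma_1_from_notes} and~\ref{graph_theory_lemma_2_from_notes}, and localizes the role of the minor hypothesis to two explicit nonvanishing conditions on $K$, whereas the paper's longer route isolates intermediate identities (the sum and product relations over $4$-cycles and the dichotomy of corollary~\ref{cor_final}) that are of some independent interest.
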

	
Let us first see how, after one has verified the above proposition, one can then easily derive Theorem~\ref{my_main_thm}.
	
	\begin{proof}[Proof of Theorem~\ref{my_main_thm}]
		Pick any two $3$-cycles $p\coloneqq (p_i)_{i=0}^3$ and $q\coloneqq (q_i)_{i=0}^3$ in $\Lambda$. We need to show that $p$ and $q$ belong to the same Case. We have the following scenarios:
		\begin{enumerate}
			\item \textit{The cycles $p$ and $q$ share the same vertices.} In this scenario, $p$ and $q$ are obviously the same $3$-cycle up to a possible reversion, in which case $p$ and $q$ belong to the same Case trivially by definition.
			\item \textit{The cycles $p$ and $q$ share exactly two vertices.} Define $\mathcal{M}$ to be the set of vertices of $p$ and $q$. It is clear that $|\mathcal{M}|=4$ and that $p$ and $q$ are $3$-cycles in $\mathcal{M}$, and so we are done by Proposition~\ref{prop_step_2_lambda_4_elements}.
			\item \textit{The cycles $p$ and $q$ share exactly one vertex.} In this case there is a unique index $i$ such that $q_i$ is a vertex of $p$. Denote by $q_{l_1}$ and $q_{l_2}$ the other two distinct vertices of $q$; and by $p_{m_1}$ and $p_{m_2}$ the other two distinct vertices of $p$. Note how the $3$-cycles $p$ and $(q_i,q_{l_1},p_{m_1},q_i)$ share exactly two vertices; namely, $q_i$ and $p_{m_1}$. By the conclusion of the above scenario, we must then have that the $3$-cycles $p$ and $(q_i,q_{l_1},p_{m_1},q_i)$ belong to the same Case. Furthermore, the $3$-cycles $q$ and $(q_i,q_{l_1},p_{m_1},q_i)$ also share exactly two vertices; namely, $q_i$ and $q_{l_1}$. Hence, for the same reason, it must be the case that the $3$-cycles $q$ and $(q_i,q_{l_1},p_{m_1},q_i)$ belong to the same Case as well. We are done by transitivity.
			\item \textit{The cycles $p$ and $q$ have completely different vertices.} Since the $3$-cycles $p$ and $(q_0,q_1,p_1,q_0)$ share exactly one vertex, it follows, thanks to the conclusion of the above scenario, that the $3$-cycles $p$ and $(q_0,q_1,p_1,q_0)$ belong to the same Case. Additionally, the $3$-cycles $q$ and $(q_0,q_1,p_1,q_0)$ share exactly two vertices; and so by the conclusion of the second scenario, we then have that the $3$-cycles $q$ and $(q_0,q_1,p_1,q_0)$ belong to the same Case. We are done by transitivity.
		\end{enumerate}
		We have exhausted all possible scenarios for the two cycles $p$ and $q$ and have successfully proved in each one of them that the $3$-cycles $p$ and $q$ belong to the same Case, as required. \qed
	\end{proof}
	
	It remains to prove Proposition~\ref{prop_step_2_lambda_4_elements}. We recall that said proposition establishes a relationship between a pair of determinantally equivalent functions, $K$ and $Q$, restricted on sets of cardinality four. As such, to simplify notations, we will assume throughout this section, without loss of generality, that $K$ and $Q$ have domain $\mathcal{M}^2$, where $\mathcal{M}\coloneqq\{1,2,3,4\}$; and we will be consistent with the labelling of the $4$-cycles and $3$-cycles in $\mathcal{M}$ that we had given in (\ref{four_cycle_labellings}) and (\ref{three_cycle_labellings}) from our previous section.  
	
	From the pigeonhole principle, we know that there will always be two $3$-cycles $p^{(i)}$ (out of the four from (\ref{three_cycle_labellings})) belonging to the same Case. Since the action of the symmetric group $S_4$ is transitive on pairs of distinct undirected $3$-cycles on four vertices, we can actually assume, without loss of generality, which specific two $3$-cycles in $\mathcal{M}$ belong to Case 1:
	
\textbf{\underline{Running assumptions for the remainder of Section~\ref{section_proof_of_my_theorem}:}} $p^{(2)}$ and $p^{(4)}$ belong to Case 1.

The remainder of this section is therefore concerned in proving that $p^{(1)}$ and $p^{(3)}$ also belong to Case 1.
	
	We need a couple of lemmas; the first lemma we present below states that if we have knowledge that one of the $3$-cycles $p^{(1)}$ or $p^{(3)}$ belongs to Case 1, then the other necessarily follows suit. 
	
	\begin{lemma}
		\label{prop_step_2.1_lambda_4_elements}
		Let $\mathbb{F}$ be a field, and suppose that functions $K:\mathcal{M}^2\to\mathbb{F}$ and $Q:\mathcal{M}^2\to\mathbb{F}$ are determinantally equivalent and nowhere-zero except possibly on the set $\{(x,x):x\in\mathcal{M}\}$. Suppose that one of the $3$-cycles $p^{(1)}$ or $p^{(3)}$ belongs to Case 1, then so must the other.
	\end{lemma}
	\begin{proof}
	Let us suppose without loss of generality that it is the $3$-cycle $p^{(3)}$ that belongs to Case 1. By Lemma~\ref{graph_theory_lemma_2_from_notes}, equation (\ref{eqn_2by2_1by1}) and the fact that the $3$-cycles $p^{(2)}$, $p^{(3)}$ and $p^{(4)}$ are assumed to belong to Case 1, we have
	\begin{align*}
		K[p^{(1)}]&=\frac{K[p^{(2)}]K'[p^{(3)}]K[p^{(4)}]}{K(4,1)K(1,4)\cdot K(3,4)K(4,3)\cdot K(2,4)K(4,2)} \\
		&=\frac{Q[p^{(2)}]Q'[p^{(3)}]Q[p^{(4)}]}{Q(4,1)Q(1,4)\cdot Q(3,4)Q(4,3)\cdot Q(2,4)Q(4,2)}\\
		&= Q[p^{(1)}].
	\end{align*}	
This proves that the remaining $3$-cycle $p^{(1)}$ also belongs to Case 1. \qed
	\end{proof}
	
	\begin{remark}
		The above lemma does not require condition (\ref{principle_minor_2_by_2_main_eqn}) from our theorem's hypothesis. It does however require $K(x,y)\neq0$ for $x\neq y$, for otherwise we would divide by zero.
	\end{remark}

\begin{remark}
Modulo a relabelling of the vertices of each of the $p^{(i)}$, the above lemma in essence states that if three of the four $3$-cycles, $p^{(1)},\ldots,p^{(4)}$, in $\mathcal{M}$ belong to the same Case, then so must the remaining one.
\end{remark}
	
	The above result implies that if we have knowledge that either of the two cycles $p^{(1)}$ and $p^{(3)}$ belong to Case 1, then the proof of Proposition~\ref{prop_step_2_lambda_4_elements} is complete; so let us suppose henceforth that $p^{(1)}$ and $p^{(3)}$ \textit{both} belong to Case 2. 
	
	\begin{lemma}
		\label{lemma_eq_four_cycle_all_case_identical}
		Let $\mathbb{F}$ be a field, and suppose that functions $K:\mathcal{M}^2\to\mathbb{F}$ and $Q:\mathcal{M}^2\to\mathbb{F}$ are determinantally equivalent and nowhere-zero except possibly on the set $\{(x,x):x\in\mathcal{M}\}$. Then, 
		\begin{equation}
			\label{eq_four_cycle_all_case_equal}
			K[q^{[1]}]+K'[q^{[1]}]=Q[q^{[1]}]+Q'[q^{[1]}].
		\end{equation}		
	\end{lemma}
	\begin{proof}
	It follows from Lemma~\ref{graph_theory_lemma_1_from_notes}, equation (\ref{eqn_2by2_1by1}), as well as our assumption that the $3$-cycles $p^{(1)}$ and $p^{(3)}$ belong to Case 2, that 
	
\begin{equation*}
	K[q^{[1]}]=\frac{K[p^{(1)}]K[p^{(3)}]}{K(1,3)K(3,1)}=\frac{Q'[p^{(1)}]Q'[p^{(3)}]}{Q(1,3)Q(3,1)}=Q'[q^{[1]}],
\end{equation*}
in which case, $K'[q^{[1]}]=Q[q^{[1]}]$. The result now immediately follows. \qed
	\end{proof}

	\begin{lemma}
		\label{lemma_two_diff_sums_equal}
		Let $\mathbb{F}$ be a field, and suppose that functions $K:\mathcal{M}^2\to\mathbb{F}$ and $Q:\mathcal{M}^2\to\mathbb{F}$ are determinantally equivalent. Then,
		\begin{equation}
			\label{leibniz_formula_4_by_4_evaluated2}
			\sum_{i=1}^3 \Big( K[q^{[i]}]+K'[q^{[i]}] \Big) = \sum_{i=1}^3 \Big( Q[q^{[i]}]+Q'[q^{[i]}] \Big).
		\end{equation}
		Moreover, if $K$ and $Q$ are nowhere-zero except possibly on the set $\{(x,x):x\in\mathcal{M}\}$, then 
		\begin{equation}
			\label{leibniz_formula_4_by_4_evaluated2_specific}
			\Big( K[q^{[2]}]+K'[q^{[2]}] \Big) + \Big( K[q^{[3]}]+K'[q^{[3]}] \Big) = \Big( Q[q^{[2]}]+Q'[q^{[2]}] \Big) + \Big( Q[q^{[3]}]+Q'[q^{[3]}] \Big).
		\end{equation}
	\end{lemma}
	\begin{proof}
		Since $|\mathcal{M}|=4$, we can use equation (\ref{equal_determinants_relation}) with $n=4$ in conjunction with the Leibniz formula for determinants to obtain, for $x_1,x_2,x_3,x_4\in\mathcal{M}$ distinct,
		\begin{equation}
			\label{leibniz_formula_4_by_4}
			\sum_{\sigma\in S_4} \text{sgn}(\sigma) \prod_{i=1}^4 K(x_i,x_{\sigma(i)})=\sum_{\sigma\in S_4} \text{sgn}(\sigma) \prod_{i=1}^4 Q(x_i,x_{\sigma(i)}).
		\end{equation}
	It is clear that for a permutation $\sigma\in S_4$ not a $3$- or $4$-cycle, the summands on both sides of (\ref{leibniz_formula_4_by_4}) agree. Therefore, as we had done when we were analyzing equation (\ref{leibniz_equality_for_3_by_3}), we can subtract such terms from both sides of (\ref{leibniz_formula_4_by_4}) and be left with
		\begin{equation*}
			\sum_{i=1}^4 K(p^{(i)}_*,p^{(i)}_*)\Big( K[p^{(i)}]+K'[p^{(i)}] \Big) - \sum_{i=1}^3 \Big( K[q^{[i]}]+K'[q^{[i]}] \Big)
		\end{equation*}
		\begin{equation}
			\label{leibniz_formula_4_by_4_evaluated1}
			=
		\end{equation}
		\begin{equation*}
			\sum_{i=1}^4 Q(p^{(i)}_*,p^{(i)}_*)\Big( Q[p^{(i)}]+Q'[p^{(i)}] \Big) - \sum_{i=1}^3 \Big( Q[q^{[i]}]+Q'[q^{[i]}] \Big),
		\end{equation*}
		where, for $i\in\{1,2,3,4\}$, $p_*^{(i)}\in\mathcal{M}$ denotes the unique vertex not in the cycle $p^{(i)}$. Now, we know from equation (\ref{equal_determinants_relation}) with $n=1$ that for every $i\in\{1,2,3,4\}$, $$K(p^{(i)}_*,p^{(i)}_*)=Q(p^{(i)}_*,p^{(i)}_*).$$
		Since $p^{(i)}$, by Lemma~\ref{prop_about_case1_case2_for_3_cycles}, belongs to either Case 1 or 2, we have
		\begin{equation*}
		\{K[p^{(i)}],K'[p^{(i)}]\}=\{Q[p^{(i)}],Q'[p^{(i)}]\}\quad\forall\enspace i\in\{1,2,3,4\}.
		\end{equation*}
		Therefore, we can subtract the first sum of both the left hand side and right hand side of equation (\ref{leibniz_formula_4_by_4_evaluated1}) and we are done.
		
		The second claim of the lemma follows immediately from Lemma~\ref{lemma_eq_four_cycle_all_case_identical} by subtracting (\ref{eq_four_cycle_all_case_equal}) from the established equation (\ref{leibniz_formula_4_by_4_evaluated2}). \qed  
	\end{proof}
	
	\begin{lemma}
		\label{lemma_two_diff_prods_equal}
		Let $\mathbb{F}$ be a field, and suppose that functions $K:\mathcal{M}^2\to\mathbb{F}$ and $Q:\mathcal{M}^2\to\mathbb{F}$ are determinantally equivalent and nowhere-zero except possibly on the set $\{(x,x):x\in\mathcal{M}\}$. Then,
		\begin{equation*}
			\Big( K[q^{[2]}]+K'[q^{[2]}] \Big) \cdot \Big( K[q^{[3]}]+K'[q^{[3]}] \Big) = \Big( Q[q^{[2]}]+Q'[q^{[2]}] \Big) \cdot \Big( Q[q^{[3]}]+Q'[q^{[3]}] \Big).
		\end{equation*}
	\end{lemma}
	\begin{proof}
	It follows from Lemma~\ref{graph_theory_lemma_3_from_notes}, equation (\ref{eqn_2by2_1by1}), our assumption that the $3$-cycles $p^{(2)}$ and $p^{(4)}$ belong to Case 1, as well as our assumption that the $3$-cycles $p^{(1)}$ and $p^{(3)}$ belong to Case 2, that
\begin{equation}
	\label{leqq1}
	K[q^{[2]}]K[q^{[3]}]=K(1,3)K(3,1)\cdot K[p^{(2)}]K'[p^{(4)}]=Q(1,3)Q(3,1)\cdot Q[p^{(2)}]Q'[p^{(4)}]=Q[q^{[2]}]Q[q^{[3]}],
\end{equation}	
 and
 \begin{equation}
 	\label{leqq2}
 	K[q^{[2]}]K'[q^{[3]}]=K(2,4)K(4,2)\cdot K[p^{(1)}]K'[p^{(3)}]=Q(2,4)Q(4,2)\cdot Q'[p^{(1)}]Q[p^{(3)}]=Q'[q^{[2]}]Q[q^{[3]}];
 \end{equation}
in which case,
\begin{equation}
	\label{leqq3}
K'[q^{[2]}]K[q^{[3]}]=Q[q^{[2]}]Q'[q^{[3]}]
\end{equation}
and
\begin{equation}
	\label{leqq4}
K'[q^{[2]}]K'[q^{[3]}]=Q'[q^{[2]}]Q'[q^{[3]}].
\end{equation}
The result now follows immediately. \qed
	\end{proof}

	\begin{corollary}
		\label{cor_final}
		Let $\mathbb{F}$ be a field, and suppose that functions $K:\mathcal{M}^2\to\mathbb{F}$ and $Q:\mathcal{M}^2\to\mathbb{F}$ are determinantally equivalent and nowhere-zero except possibly on the set $\{(x,x):x\in\mathcal{M}\}$. Then, one of the following two statements holds:
		\begin{enumerate}[(i)]
			\item $\begin{aligned}[t]
				K &[q^{[2]}]=Q[q^{[2]}]\text{ and } K'[q^{[2]}]=Q'[q^{[2]}]\quad\text{ or }\quad K[q^{[2]}]=Q'[q^{[2]}]\text{ and } K'[q^{[2]}]=Q[q^{[2]}], \\
				&\qquad\qquad\qquad\qquad\qquad\qquad\qquad\qquad\qquad\text{and} \\
				K &[q^{[3]}]=Q[q^{[3]}]\text{ and } K'[q^{[3]}]=Q'[q^{[3]}]\quad\text{ or }\quad K[q^{[3]}]=Q'[q^{[3]}]\text{ and } K'[q^{[3]}]=Q[q^{[3]}]; \\
			\end{aligned}$
			\item $\begin{aligned}[t]
				K[q^{[2]}]+K'[q^{[2]}]=Q[q^{[3]}]+Q'[q^{[3]}]\text{ and }K[q^{[3]}]+K'[q^{[3]}]=Q[q^{[2]}]+Q'[q^{[2]}].
			\end{aligned}$
		\end{enumerate}
	\end{corollary}
	\begin{proof}
		By Lemma~\ref{lemma_two_diff_sums_equal} (more specifically, equation (\ref{leibniz_formula_4_by_4_evaluated2_specific}) therein) and  Lemma~\ref{lemma_two_diff_prods_equal}, we have, by a simple application of Lemma~\ref{key_trick_lemma}, that either statement (ii) holds; or the equations
		\begin{equation*}
			K[q^{[2]}]+K'[q^{[2]}]=Q[q^{[2]}]+Q'[q^{[2]}]\text{ and }K[q^{[3]}]+K'[q^{[3]}]=Q[q^{[3]}]+Q'[q^{[3]}]
		\end{equation*}
	hold. We note that, thanks to equation (\ref{eqn_2by2_1by1}), we also have the equations $$K[q^{[2]}]\cdot K'[q^{[2]}]=Q[q^{[2]}]\cdot Q'[q^{[2]}]\text{ and   }K[q^{[3]}]\cdot K'[q^{[3]}]=Q[q^{[3]}]\cdot Q'[q^{[3]}]$$ in our possession. An application of Lemma~\ref{key_trick_lemma} then brings up statement (i). \qed
	\end{proof}
	
	With this corollary in our disposal, we are now in the position to place the last piece of the puzzle on our board, that is, to prove that, in fact, the $3$-cycles $p^{(1)}$ and $p^{(3)}$ actually belong to Case 1:
	
		First suppose there is some index $i\in\{1,2,3,4\}$ such that $K[p^{(i)}]=K'[p^{(i)}]$. Then, by Lemma~\ref{prop_about_case1_case2_for_3_cycles}, $$Q[p^{(i)}]=K[p^{(i)}]=K'[p^{(i)}]=Q'[p^{(i)}],$$ that is, $p^{(i)}$ belongs to Case 1 and 2 simultaneously. Consequently, three of $p^{(1)},p^{(2)},p^{(3)},p^{(4)}$ belong to the same Case; and so then we are done by Lemma~\ref{prop_step_2.1_lambda_4_elements} (cf., the second remark after the lemma). So let us suppose henceforth that
		\begin{equation}
			\label{non_equality_of_K_and_K_prime}
			K[p^{(i)}]\neq K'[p^{(i)}]\quad\forall i\in\{1,2,3,4\}.
		\end{equation}
		In other words, we are supposing that each of the $3$-cycles $p^{(i)}$ belong to exactly one Case. Specifically, we are supposing that $p^{(2)}$ and $p^{(4)}$ \textit{only} belong to Case 1, and that $p^{(1)}$ and $p^{(3)}$ \textit{only} belong to Case 2. We seek for a contradiction. We have two scenarios to examine corresponding to the cases in Corollary~\ref{cor_final}; let us start with case (i).
		
		 It follows from Lemma~\ref{graph_theory_lemma_1_from_notes} (through a simple relabelling of vertices) that for every function $h:\mathcal{M}^2\to\mathbb{F}$ that is nowhere-zero, except perhaps on the set $\{(x,x):x\in\mathcal{M}\}$, that
		\begin{equation}
			\label{start_eq1}
			h[q^{[2]}]=\frac{h[p^{(1)}]h'[p^{(4)}]}{h(2,3)h(3,2)}=\frac{h'[p^{(3)}]h[p^{(2)}]}{h(1,4)h(4,1)}.
		\end{equation}
		Suppose first that the instance $K[q^{[2]}]=Q[q^{[2]}]$ from (i) of Corollary~\ref{cor_final} holds. Then, (\ref{start_eq1}) in conjunction with equation (\ref{eqn_2by2_1by1}) yields
		\begin{equation*}
			\frac{K[p^{(1)}]K'[p^{(4)}]}{K(2,3)K(3,2)}=K[q^{[2]}]=Q[q^{[2]}]=\frac{Q[p^{(1)}]Q'[p^{(4)}]}{Q(2,3)Q(3,2)}=\frac{Q[p^{(1)}]Q'[p^{(4)}]}{K(2,3)K(3,2)}.
		\end{equation*}
		Thus, $$K[p^{(1)}]K'[p^{(4)}]=Q[p^{(1)}]Q'[p^{(4)}].$$
		Since we have assumed $p^{(4)}$ to be a $3$-cycle belonging to Case 1, the above equation implies that $p^{(1)}$ also belongs to Case 1; contradicting our earlier assumption that $p^{(1)}$ \textit{only} belongs to Case 2. 
		
		Let us now suppose that the instance $K[q^{[2]}]=Q'[q^{[2]}]$ from (i) of Corollary~\ref{cor_final} holds. Then, (\ref{start_eq1}) in conjunction with equation (\ref{eqn_2by2_1by1}) yields
	\begin{equation*}
		\frac{K[p^{(1)}]K'[p^{(4)}]}{K(2,3)K(3,2)}=K[q^{[2]}]=Q'[q^{[2]}]=\frac{Q'[p^{(1)}]Q[p^{(4)}]}{Q(2,3)Q(3,2)}=\frac{Q'[p^{(1)}]Q[p^{(4)}]}{K(2,3)K(3,2)}.
	\end{equation*}	
Thus,
\begin{equation*}
K[p^{(1)}]K'[p^{(4)}]=Q'[p^{(1)}]Q[p^{(4)}].
\end{equation*}
Since we have assumed $p^{(1)}$ to be a $3$-cycle belonging to Case 2, the above equation implies that $p^{(4)}$ also belongs to Case 2; contradicting our earlier assumption that $p^{(4)}$ \textit{only} belongs to Case 1. 		
		
We conclude that (i) of Corollary~\ref{cor_final} cannot possibly hold; so let us now assume that (ii) of Corollary~\ref{cor_final} holds. In the proof of Lemma~\ref{lemma_two_diff_prods_equal} we had established equations (\ref{leqq1}) -- (\ref{leqq4}). By dividing (\ref{leqq1}) by (\ref{leqq3}), we get
		\begin{equation}
			\label{m2}
			K[q^{[2]}]Q'[q^{[3]}]=K'[q^{[2]}]Q[q^{[3]}].
		\end{equation}
		Now, by our usual application of Lemma~\ref{key_trick_lemma}, the first equation from (ii) of Corollary~\ref{cor_final} together with equation (\ref{m2}) brings about the following two possibilities:
		\begin{enumerate}[(I)]
			\item $\begin{aligned}[t]
				\label{n1}
				K &[q^{[2]}]+K'[q^{[2]}]=0\text{ and }Q[q^{[3]}]+Q'[q^{[3]}]=0; \\
			\end{aligned}$
			\item $\begin{aligned}[t]
				\label{n2}
				K &[q^{[2]}]=Q[q^{[3]}]\text{ and }K'[q^{[2]}]=Q'[q^{[3]}].
			\end{aligned}$
		\end{enumerate}
	Let us first assume that (I) holds. By dividing (\ref{leqq2}) by (\ref{leqq1}), we get
	\begin{align*}
		\label{mj1}
		\frac{Q'[q^{[2]}]}{Q[q^{[2]}]}=\frac{K'[q^{[3]}]}{K[q^{[3]}]}&=-\frac{K'[q^{[3]}]}{K[q^{[3]}]}\frac{K[q^{[2]}]}{K'[q^{[2]}]} &&\text{(by assumption (I), $K[q^{[2]}]=-K'[q^{[2]}]$)} \\
		&=-\frac{K(2,4)K(4,2)\cdot K[p^{(1)}]K'[p^{(3)}]}{K(2,4)K(4,2)\cdot K'[p^{(1)}]K[p^{(3)}]} &&\text{(by Lemma~\ref{graph_theory_lemma_3_from_notes}, equation (\ref{graph_theory_lemma_3_from_notes_eq2}))} \\
		&= -1,
	\end{align*}
where the last line follows from our assumption that the $3$-cycles $p^{(1)}$ and $p^{(3)}$ belong to Case 2. Therefore, 
$$K[q^{[2]}]+K'[q^{[2]}]=0=Q[q^{[2]}]+Q'[q^{[2]}].$$	
By equation (\ref{eqn_2by2_1by1}), we also have $K[q^{[2]}]\cdot K'[q^{[2]}]=Q[q^{[2]}]\cdot Q'[q^{[2]}]$; and so, by the usual application of Lemma~\ref{key_trick_lemma}, we have
\begin{equation*}
	K[q^{[2]}]=Q[q^{[2]}]\text{ and } K'[q^{[2]}]=Q'[q^{[2]}]\quad\text{ or }\quad K[q^{[2]}]=Q'[q^{[2]}]\text{ and } K'[q^{[2]}]=Q[q^{[2]}].
\end{equation*}
But this is precisely the first part of (i) of Corollary~\ref{cor_final}, which we had investigated earlier and had obtained a contradiction. 	
	
The remaining scenario to check is when (II) holds; so let us now assume that (II) holds. Thanks to (\ref{start_eq1}), equation (\ref{eqn_2by2_1by1}), and the fact that the $3$-cycles $p^{(1)}$ and $p^{(4)}$ belong to Cases 2 and 1, respectively, we have
		\begin{equation*}
			K[q^{[2]}]=\frac{K[p^{(1)}]K'[p^{(4)}]}{K(2,3)K(3,2)}=\frac{Q'[p^{(1)}]Q'[p^{(4)}]}{Q(2,3)Q(3,2)}.
		\end{equation*}
	This equation in conjunction with the assumed $K[q^{[2]}]=Q[q^{[3]}]$ of (II) yields
	\begin{equation*}
	\cancel{Q(1,3)}\cancel{Q(3,2)}\cancel{Q(2,4)}Q(4,1)=\frac{\cancel{Q(1,3)}\cancel{Q(3,2)}Q(2,1)\cdot \cancel{Q(2,4)}Q(4,3)\cancel{Q(3,2)}}{Q(2,3)\cancel{Q(3,2)}},
	\end{equation*}
	which simplifies to
	\begin{equation*}
	Q(2,3)Q(4,1)=Q(2,1)Q(4,3),
	\end{equation*}
that is,
	\begin{equation*}
		\begin{vmatrix}
			Q(2,1) & Q(2,3) \\
			Q(4,1) & Q(4,3)
		\end{vmatrix}=0;	
	\end{equation*}
	a contradiction to our theorem's hypothesis. Since Proposition~\ref{prop_step_2_lambda_4_elements} is now proven, we can formally conclude Theorem~\ref{my_main_thm}.

	\section*{Acknowledgements}
	The author is supported by the Warwick Mathematics Institute Centre for Doctoral Training, and gratefully acknowledges funding from the University of Warwick. The author would like to thank the anonymous referee for their valuable suggestions that significantly improved the clarity and exposition of the paper.

	\bibliographystyle{plain}
	\bibliography{det_equiv_nonzero_fns_v2_bibliography.bib}
\end{document}